\documentclass[11pt, a4paper]{amsart}
\usepackage{a4}
\usepackage{amssymb}
\usepackage{amsmath}
\usepackage{amsthm}
\usepackage{amstext}
\usepackage{amscd}
\usepackage{latexsym}
\usepackage{mathabx}
\usepackage{graphics}
\usepackage{color}
\usepackage[all]{xy}
\usepackage{verbatim}

\usepackage[colorlinks, pagebackref]{hyperref}
\hypersetup{
  colorlinks=true,
  citecolor=blue,
  linkcolor=blue,
  urlcolor=blue}

\makeatletter
\def\@tocline#1#2#3#4#5#6#7{\relax
  \ifnum #1>\c@tocdepth 
  \else
    \par \addpenalty\@secpenalty\addvspace{#2}%
    \begingroup \hyphenpenalty\@M
    \@ifempty{#4}{%
      \@tempdima\csname r@tocindent\number#1\endcsname\relax
    }{%
      \@tempdima#4\relax
    }%
    \parindent\z@ \leftskip#3\relax \advance\leftskip\@tempdima\relax
    \rightskip\@pnumwidth plus4em \parfillskip-\@pnumwidth
    #5\leavevmode\hskip-\@tempdima
      \ifcase #1
       \or\or \hskip 2em \or \hskip 2em \else \hskip 3em \fi%
      #6\nobreak\relax
    \dotfill\hbox to\@pnumwidth{\@tocpagenum{#7}}\par
    \nobreak
    \endgroup
  \fi}
\makeatother



\newtheorem{intro-thm}{Theorem}[]

\theoremstyle{plain}
\newtheorem{thm}{Theorem}[section]

\newtheorem{lemma}[thm]{Lemma}
\newtheorem{lem}[thm]{Lemma}
\newtheorem{cor}[thm]{Corollary}

\newtheorem{prop}[thm]{Proposition}
\newtheorem{conj}[thm]{Conjecture}
\theoremstyle{definition}

\newtheorem{notation}[thm]{Notation}
\newtheorem{construction}[thm]{Construction}
\newtheorem{convention}[thm]{Convention}
\newtheorem{rmk}[thm]{Remark}

\newtheorem{defn}[thm]{Definition}
\newtheorem{ex}[thm]{Example}



\newcommand{\dlim}{\mathop{\varinjlim}\limits}  




\newcommand{\Dim}{{\rm dim}}

\newcommand{\Hom}{{\rm Hom}}

\newcommand{\Spec}{{\rm Spec \,}}
\newcommand{\Sing}{{\rm Sing}}



\newcommand{\sE}{{\mathcal E}}
\newcommand{\sF}{{\mathcal F}}

\newcommand{\sH}{{\mathcal H}}

\newcommand{\sL}{{\mathcal L}}

\newcommand{\sO}{{\mathcal O}}

\newcommand{\sS}{{\mathcal S}}

\newcommand{\sX}{{\mathcal X}}

\newcommand{\A}{{\mathbb A}}

\newcommand{\C}{{\mathbb C}}

\newcommand{\G}{{\mathbb G}}

\newcommand{\N}{{\mathbb N}}

\renewcommand{\P}{{\mathbb P}}

\newcommand{\Sh}{\sS h}
\newcommand{\deltaop}{\Delta^{op}(\sS h(Sm/{k}))}
\newcommand{\pdeltaop}{\Delta^{op}(P\sS h(Sm/{k}))}
\newcommand{\psh}{\pi_0}

\def\<{\langle}
\def\>{\rangle} 
\def\-{\overline} 
\def\~{\widetilde}
\def\^{\widehat}

\input{xy}
\xyoption{all}
\begin{document}

\title{$\A^1$-connected components of schemes}

\author{Chetan Balwe}

\address{School of Mathematics, Tata Institute of Fundamental
Research, Homi Bhabha Road, Bombay 400005, India}

\email{cbalwe@math.tifr.res.in}

\author{Amit Hogadi}

\address{School of Mathematics, Tata Institute of Fundamental
Research, Homi Bhabha Road, Bombay 400005, India}

\email{amit@math.tifr.res.in}

\author{Anand Sawant}

\address{School of Mathematics, Tata Institute of Fundamental
Research, Homi Bhabha Road, Bombay 400005, India}

\email{anands@math.tifr.res.in}

\thanks{Anand Sawant was supported by the Council of Scientific and Industrial Research, India under the Shyama Prasad Mukherjee Fellowship SPM-07/858(0096)/2011-EMR-I}


\date{}

\begin{abstract}
A conjecture of Morel asserts that the sheaf $\pi_0^{\A^1}(\sX)$ of $\A^1$-connected components of a simplicial sheaf $\sX$ is $\A^1$-invariant.  A conjecture of Asok and Morel asserts that the sheaves of $\A^1$-connected components of smooth schemes over a field coincide with the sheaves of their $\A^1$-chain-connected components.  Another conjecture of Asok and Morel states that the sheaf of $\A^1$-connected components is a birational invariant of smooth proper schemes.  In this article, we exhibit examples of schemes for which conjectures of Asok-Morel fail to hold and whose $Sing_*$ is not $\A^1$-local.  We also give equivalent conditions for Morel's conjecture to hold and obtain an explicit conjectural description of $\pi_0^{\A^1}(\sX)$.  A method suggested by these results is then used to prove Morel's conjecture for non-uniruled surfaces over a field.
\end{abstract}

\maketitle

{\small \tableofcontents}
\section{Introduction}

$\A^1$-homotopy theory, developed by Morel and Voevodsky \cite{Morel-Voevodsky} in the 1990's, is a \emph{homotopy theory} for schemes in which the affine line $\A^1$ plays the role of the unit interval in usual homotopy theory.  In the last few years, Morel has developed \emph{$\A^1$-algebraic topology} over a perfect field \cite{Morel} and obtained analogues of many important theorems in classical algebraic topology in the $\A^1$-homotopic realm.  In $\A^1$-homotopy theory, one needs to suitably enlarge the category of smooth schemes over a field in order to be able to perform various categorical constructions.  For various technical reasons, in $\A^1$-homotopy theory, one enlarges the category of smooth schemes over a field to the category of simplicial presheaves/sheaves of sets over the site of smooth schemes over a field with the Nisnevich topology.  By inverting \emph{$\A^1$-weak equivalences}, one obtains the \emph{$\A^1$-homotopy category}, defined in \cite{Morel-Voevodsky} by Morel-Voevodsky.  Analogous to algebraic topology, given a simplicial presheaf/sheaf $\sX$, one then studies the $\A^1$-homotopy sheaves $\pi_0^{\A^1}(\sX)$ and $\pi_i^{\A^1}(\sX, x)$, where $i\geq 1$ and $x \in \sX(\Spec k)$ is a basepoint. Throughout this paper, we will work over a perfect base field $k$ and denote the unpointed $\A^1$-homotopy category by $\sH(k)$.   

A sheaf of sets $\sF$ on the Nisnevich site $Sm/k$ of smooth schemes over $k$ is said to be \emph{$\A^1$-invariant} if the projection map $U \times \A^1 \to U$ induces a bijection $\sF(U) \to \sF(U \times \A^1)$, for all smooth schemes $U$.  Morel has proved the $\A^1$-invariance of the higher $\A^1$-homotopy sheaves of groups $\pi_i^{\A^1}(\sX,x)$, for $i \geq 1$, for any simplicial sheaf of sets $\sX$ over smooth schemes over a perfect field $k$ (see \cite[Theorem 6.1 and Corollary 6.2]{Morel}).  Morel has conjectured that the sheaf of $\A^1$-connected components $\pi_0^{\A^1}(\sX)$ of a simplicial sheaf $\sX$ is $\A^1$-invariant.  Morel's conjecture has been proved for motivic $H$-groups and homogeneous spaces for motivic $H$-groups by Choudhury in \cite{Choudhury}.  Any scheme $X$ over $k$ can be viewed as an object of the $\A^1$-homotopy category by means of the Nisnevich sheaf on $Sm/k$ given by the functor of points of $X$.  The $\A^1$-invariance of $\pi_0^{\A^1}(X)$ is easily seen to hold when $X$ is a scheme of dimension $\leq 1$ or an \emph{$\A^1$-rigid scheme} (for example, an abelian variety; see Definition \ref{definition A1 rigid}) or an \emph{$\A^1$-connected scheme} (such as the affine line $\A^1$).  It is also known to hold for smooth toric varieties (see Wendt \cite[Lemma 4.2 and Lemma 4.4]{Wendt}).  We now briefly describe the main results of this paper.  

\begin{intro-thm}[Theorem \ref{theorem lim S^n}, Corollary \ref{a1-invariance-candidate}] 
\label{Intro-theorem lim S^n}
If $\sF$ is a Nisnevich sheaf of sets on $Sm/k$, considered as a simplicially constant sheaf in $\bigtriangleup^{op}Sh(Sm/k)$.  Let $\mathcal S(\sF)$ denote the sheaf of $\A^1$-chain connected components of $\sF$ (see Definition \ref{definition A1 chain connected components}).  Then the sheaf $\sL(\sF):= \underset{n}{\varinjlim}~ \mathcal S^n(\sF)$ is $\A^1$-invariant.  Furthermore, if Morel's conjecture (Conjecture \ref{a1invariance}) is true, then the canonical map $$\pi_0^{\A^1}(\sF) \to \sL(\sF)$$ is an isomorphism.
\end{intro-thm}

The sheaf $\sS(X)$ (referred to as $\pi_0^{ch}(X)$ by Asok and Morel) of $\A^1$-chain-connected components of a smooth scheme $X$ over a field $k$ has been studied in the work of Asok and Morel \cite{Asok-Morel}, where they provide explicit connections between the notions of $\A^1$-connectedness and chain-connectedness and various rationality and near-rationality properties of the scheme $X$.  They have conjectured that the canonical map $\pi_0^{ch}(X) \to \pi_0^{\A^1}(X)$ is an isomorphism, for all smooth schemes $X$ (see \cite[Conjecture 2.2.8]{Asok-Morel}).  In support of this, they have shown that for a proper scheme $X$, the sections of the two sheaves $\pi_0^{ch}(X)$ and $\pi_0^{\A^1}(X)$ agree over all finitely generated, separable field extensions of $k$.  This is proved in \cite[Theorem 2.4.3 and Section 6]{Asok-Morel} by constructing another sheaf $\pi_0^{b\A^1}(X)$ of \emph{birational $\A^1$-connected components} of $X$ and by showing that its sections over finitely generated separable field extensions of $k$ are 
the same as those of $\pi_0^{ch}(X)$.  Asok and Morel have also conjectured that the natural map $\pi_0^{\A^1}(X) \to \pi_0^{b\A^1}(X)$ is an isomorphism, for all proper schemes $X$ of finite type over a field (see \cite[Conjecture 6.2.7]{Asok-Morel}).  

We obtain a refinement of the result of Asok and Morel \cite[Theorem 2.4.3]{Asok-Morel} about sections of $\pi_0^{\A^1}(X)$ over field extensions of $k$ using a method suggested by Theorem \ref{Intro-theorem lim S^n}.  Our proof gives the result of Asok and Morel by a direct geometric argument and completely avoids the use of $\pi_0^{b\A^1}(X)$. 

\begin{intro-thm}[Theorem \ref{theorem field case}, Corollary \ref{corollary field case}] 
\label{Intro-theorem field case}
Let $X$ be a proper scheme over $k$ and let $L$ be a finitely generated field extension of $k$.  For every positive integer $n$, we have $\sS(X)(\Spec L) = \sS^n(X)(\Spec L)$.  Consequently, the canonical epimorphism $\sS(X)(\Spec L) \to \pi_0^{\A^1}(X)(\Spec L)$ is an isomorphism. 
\end{intro-thm}

Theorem \ref{Intro-theorem lim S^n} suggests that the conjectures of Morel and of Asok-Morel could be proved for a scheme $X$ if we prove that $\sS(X) = \sS^2(X)$. This observation allows one to bring in geometric methods to study this question.  In this paper, we have taken this approach to prove the conjectures of Morel and of Asok-Morel for (possibly singular) proper non-uniruled surfaces (see Theorem \ref{theorem non-uniruled}).  However, we also present an example of a smooth, proper variety $X$ (of dimension $>2$, over $\mathbb C$) for which $\sS(X) \neq \sS^2(X)$.  For this variety $X$, we show that the morphism $\sS(X) \to \pi_0^{\A^1}(X)$ is not an isomorphism (disproving the conjecture of Asok-Morel \cite[Conjecture 2.2.8]{Asok-Morel}).  We also show that $Sing_*(X)$ is not $\A^1$-local, thus answering a question raised in \cite[Remark 2.2.9]{Asok-Morel}.  We also exhibit an example of a smooth proper scheme $X$ (Example \ref{example pi_0 not birational}) for which the conjecture by Asok-Morel about the birationality of $\pi_0^{\A^1}(X)$ fails to hold.

Section \ref{Section lim S^n} begins with some preliminaries and generalities on $\A^1$-homotopy theory.  We then give a proof of Theorem \ref{Intro-theorem lim S^n} and obtain equivalent characterizations of Morel's conjecture about $\A^1$-invariance of $\pi_0^{\A^1}$ of a simplicial sheaf and an explicit conjectural description of $\pi_0^{\A^1}(\sX)$.  In Section \ref{Section Asok-Morel}, we study the sheaves of $\A^1$-connected and chain-connected components of a scheme and give a proof of Theorem \ref{Intro-theorem field case} and prove the conjectures of Morel and of Asok-Morel for non-uniruled surfaces.  Finally, in Section \ref{section counterexamples}, we give an examples of schemes $X$ for which the conjectures of Asok-Morel about birationality of $\pi_0^{\A^1}(X)$ and equivalence of $\A^1$-connected and chain-connected components fail to hold.

\section{\texorpdfstring{$\A^1$}{A1}-connected components of a simplicial sheaf} 
\label{Section lim S^n}

\subsection{Basic definitions and generalities}
\label{subsection generalities}

We will always work over a base field $k$. Let $Sm/k$ denote the category of smooth finite type schemes over $k$ with Nisnevich topology. We let $\pdeltaop$ (resp. $\deltaop$) denote the category of simplicial presheaves (resp. sheaves) of sets on $Sm/k$. All presheaves or sheaves will also be considered as constant simplicial objects. There is an adjunction 
\[ 
\pdeltaop \overset{a_{Nis}}{\underset{i}{\rightleftarrows}} \deltaop , 
\]
\noindent where $i$ denotes the inclusion and $a_{Nis}$ denotes the Nisnevich sheafification. Both the categories have a model category structure, called the \emph{Nisnevich local model structure} (see \cite{Morel-Voevodsky} and \cite[Appendix B]{Jardine}), where weak equivalences are local (stalkwise) weak equivalences and cofibrations are monomorphisms.  The corresponding homotopy category of $\deltaop$ is called the \emph{simplicial homotopy category} and is denoted by $\mathcal H_s(k)$.  Moreover, the above adjunction is a Quillen equivalence.  

The left Bousfield localization of this model structure on $\bigtriangleup^{op}P\Sh(Sm/k)$ with respect to the collection of projection maps $\mathcal{X} \times \mathbb{A}^1 \to \mathcal{X}$ is called the \emph{$\mathbb{A}^1$-model structure}.  The resulting homotopy category is called the \emph{$\A^1$-homotopy category} and is denoted by $\mathcal{H}(k)$. 

There exists an \emph{$\A^1$-fibrant replacement functor} 
\[
L_{\mathbb{A}^1} : \pdeltaop \to \pdeltaop
\] 
such that for any space $\mathcal{X}$, the object $L_{\A^1}(\mathcal{X})$ is an $\mathbb{A}^1$-fibrant object. Moreover, there exists a canonical morphism $\mathcal{X} \to L_{\A^1}(\mathcal{X})$  which is an $\mathbb{A}^1$-weak equivalence.  A simplicial sheaf on $Sm/k$ is $\mathbb{A}^1$-fibrant if and only if it is simplicially fibrant (that is, fibrant in the Nisnevich local model structure) and $\mathbb{A}^1$-local (\cite[\textsection 2,  Proposition 3.19]{Morel-Voevodsky}).  See \cite[\textsection 2, Theorem 1.66 and p. 107]{Morel-Voevodsky} for the definition of the $\A^1$-fibrant replacement functor and more details. 

With the above notation, it is easy to see that $i$ preserves fibrations, $\A^1$-fibrations and $\A^1$-local objects,  and $a_{Nis}$ preserves cofibrations and $\A^1$-weak equivalences. 

\begin{defn}
For any simplicial presheaf (or a sheaf) $\sX$ on $Sm/k$, we define $\pi_0^s(\sX)$ to be the Nisnevich sheafification of the presheaf of sets $\psh(\sX)$ given by
\[
\psh(\sX)(U) := \Hom_{\mathcal H_s(k)}(U, \sX).
\] 
\end{defn}

\begin{notation}\label{notation1}
If we have two sets $R,S$ with maps $f,g:R \to S$, we denote by $\dfrac{S}{_gR_f}$, the quotient of $S$ by the equivalence relation generated by declaring $f(t)\sim g(t)$ for all $t\in R$. In this notation, for any simplicial presheaf $\sX$,
$$ \psh(\sX)(U) = \frac{\sX_0(U)}{_{d_0}\sX_1(U)_{d_1}},$$
where $d_0,d_1: \sX_1(U) \to \sX_0(U)$ are the face maps in the simplicial set $\sX_{\bullet}(U)$.
\end{notation}

\begin{defn} \label{definition a1 connected components}
The sheaf of \emph{$\A^1$-connected components} of a simplicial sheaf $\sX \in \deltaop$ is defined to be $$\pi_0^{\A^1}(\sX) := \pi_0^s(L_{\A^1}(\sX)).$$
\end{defn}

In other words, for $\sX \in \deltaop$, $\pi_0^{\A^1}(\sX)$ is the sheafification of the presheaf $$U \in Sm/k ~ \mapsto ~ \Hom_{\mathcal H(k)}(U, \sX).$$

\begin{defn} \label{definition a1 invariance}
A presheaf $\sF$ of sets on $Sm/k$ is said to be {\em $\A^1$-invariant} if for every $U$, the map $ \sF(U)\to \sF(U\times \A^1)$ induced by the projection $U\times \A^1 \to U$, is a bijection. 
\end{defn}

It is easy to see that the presheaf 
$$ U \mapsto \psh(L_{\A^1}(\sX)(U))$$ 
is $\A^1$-invariant. A conjecture of Morel (see \cite[1.12]{Morel}) asserts that its Nisnevich sheafification $\pi_0^{\A^1}(\sX)$ remains $\A^1$-invariant.

\begin{conj}[Morel]\label{a1invariance}
For any simplicial sheaf $\sX$, $\pi_0^{\A^1}(\sX)$ is $\A^1$-invariant.
\end{conj}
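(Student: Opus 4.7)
The plan is to use the $\A^1$-invariant candidate sheaf $\sL(\sF) = \underset{n}{\varinjlim}~ \sS^n(\sF)$ constructed in Theorem \ref{Intro-theorem lim S^n} as a surrogate for $\pi_0^{\A^1}(\sF)$, and to reduce Morel's conjecture to showing that the canonical morphism $\pi_0^{\A^1}(\sF) \to \sL(\sF)$ is an isomorphism. The guiding idea is to trade the abstract $\A^1$-fibrant replacement for the purely geometric iterated chain-connected components functor $\sS$, whose behaviour is amenable to birational-geometric techniques. Since Theorem \ref{Intro-theorem lim S^n} is formulated for a Nisnevich sheaf of sets $\sF$ regarded as a constant simplicial object, I would focus on that case first; the extension to a general simplicial sheaf $\sX$ is expected to follow by running the argument on the sheaf $\pi_0^s(\sX)$ enriched with the $\A^1$-identifications it inherits from the simplicial structure of $\sX$.

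The first step is to construct the canonical morphism. By Theorem \ref{Intro-theorem lim S^n}, $\sL(\sF)$ is $\A^1$-invariant, and regarded as a constant simplicial sheaf it is $\A^1$-local. The universal property of the $\A^1$-fibrant replacement then produces a canonical factorization $\sF \to L_{\A^1}(\sF) \to \sL(\sF)$, and applying simplicial $\pi_0$ followed by Nisnevich sheafification yields the canonical morphism $\pi_0^{\A^1}(\sF) \to \sL(\sF)$. Because the target is already $\A^1$-invariant, any proof that this morphism is an isomorphism would immediately give Morel's conjecture for $\sF$.

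The main obstacle, and what makes the statement a conjecture rather than a theorem in full generality, is precisely showing that $\pi_0^{\A^1}(\sF) \to \sL(\sF)$ is an isomorphism: one must verify that whenever two sections of $\sF$ become $\A^1$-equivalent in the $\A^1$-homotopy category, they are already linked by a finitely iterated sequence of explicit $\A^1$-chain connections performed locally in the Nisnevich topology. In geometrically favourable situations, such as proper non-uniruled surfaces, one hopes to exploit birational geometry and the scarcity of rational curves to show that $\sS$ stabilizes quickly (in fact $\sS(\sF) = \sS^2(\sF)$), in which case the strategy above goes through. However, Section \ref{section counterexamples} shows that even $\sS(X) = \sS^2(X)$ can fail for smooth proper varieties of dimension greater than two, so any general proof would require a substantially finer analysis of how an abstract $\A^1$-identification can be realized by an explicit geometric chain after Nisnevich refinement.
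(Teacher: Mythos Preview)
The statement you are asked to prove is \emph{Conjecture}~\ref{a1invariance}, and the paper does not prove it: it is stated as an open conjecture of Morel, and the paper's contribution is to reformulate it (Lemma~\ref{a1-invariance-equiv-sheaves}, Proposition~\ref{a1-invariance-equiv-spaces}, Corollary~\ref{a1-invariance-candidate}) and to verify it in special cases such as non-uniruled surfaces. So there is no ``paper's own proof'' to compare against.

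Your proposal is not a proof either, and to your credit you say so explicitly: you correctly identify the paper's strategy of replacing $\pi_0^{\A^1}(\sF)$ by the geometrically constructed $\A^1$-invariant sheaf $\sL(\sF)$, you correctly derive the canonical map $\pi_0^{\A^1}(\sF) \to \sL(\sF)$ from the universal property, and you correctly isolate the gap --- showing this map is an isomorphism --- as the unresolved content of the conjecture. What you have written is an accurate summary of the paper's reformulation, not a proof, and the paper itself offers nothing more in the general case. One small point: your remark that the extension from sheaves of sets to general simplicial sheaves ``is expected to follow by running the argument on $\pi_0^s(\sX)$'' is itself nontrivial --- Proposition~\ref{a1-invariance-equiv-spaces} shows that this reduction is in fact \emph{equivalent} to the conjecture, not a routine step.
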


The following example shows that sheafification in Nisnevich topology can destroy $\A^1$-invariance of a presheaf, in general. 

\begin{ex}
Consider the presheaf $\sF$ on $Sm/k$ whose sections $\sF(U)$ are defined to be the set of $k$-morphisms from $U \to \A^1_{k}$ which factor through a proper open subset of $\A^1_{k}$. One can observe that $\sF$ is $\A^1$-invariant, but its Nisnevich sheafification, which is the sheaf represented by $\A^1_{k}$, is clearly not $\A^1$-invariant.  
\end{ex}

In \cite[Theorem 4.18]{Choudhury},  Morel's conjecture was proved in the special case when $\sX$ is a motivic $H$-group or a homogeneous space for a motivic $H$-group. As observed in \cite{Choudhury}, such simplicial sheaves $\sX$ have a special property - for $U \subset Y$, an inclusion of dense open subscheme, $\pi_0^s(\sX)(Y)\to \pi_0^s(\sX)(U)$ is injective (see \cite[Corollary 4.17]{Choudhury}). Unfortunately, this injectivity property fails for general simplicial sheaves $\sX$, as shown by the following example.

\begin{ex}\label{nonseparated}
Let $U$ be a non-empty proper subscheme of an abelian variety $A$ over $k$. Let $X$ be the non-separated scheme obtained by gluing two copies of $A$ along $U$.  Let $X$ continue to denote the corresponding constant simplicial sheaf in $\deltaop$.  Clearly, $X(A)\to X(U)$ is not injective.  However, one can verify that the sheaf represented by $X$ is $\A^1$-invariant  and hence, $X=\pi_0^{\A^1}(X)$. 
\end{ex}

For any simplicial sheaf $\sX$, it is easy to see from the definitions that the sheaf $\pi_0^{\A^1}(\sX)$ has the following universal property.

\begin{lem} \label{universal property pi_0A1}
Let $\sX \in \deltaop$ be a simplicial sheaf and $\sF$ be an $\A^1$-invariant sheaf of sets on $Sm/k$.  Then any morphism $\sX \to \sF$ factors uniquely through the canonical morphism $\sX \to \pi_0^{\A^1}(\sX)$.  
\end{lem}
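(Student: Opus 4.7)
\emph{Proof plan.}  The plan is to establish a natural bijection
\[\Hom_{Sh}(\pi_0^{\A^1}(\sX), \sF) \cong \Hom_{\deltaop}(\sX, \sF)\]
given by precomposition with the canonical map $\sX \to \pi_0^{\A^1}(\sX)$, by first showing that the simplicial mapping space into a discrete $\A^1$-invariant sheaf is itself discrete, and then invoking the $\A^1$-weak equivalence $\sX \to L_{\A^1}(\sX)$.

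\textbf{Step 1 ($\A^1$-fibrancy of $\sF$).}  First I would verify that $\sF$, viewed as a constant simplicial sheaf, is $\A^1$-fibrant.  It is simplicially fibrant because its stalks are discrete and hence Kan.  For $\A^1$-locality, observe that for a discrete simplicial sheaf $\sF$ and any cofibrant $\sY \in \deltaop$, the simplicial mapping space $\mathrm{Map}(\sY, \sF)$ is a discrete simplicial set: in degree $n$ it is $\Hom_{\deltaop}(\sY \times \Delta^n, \sF) = \Hom_{Sh}(\pi_0^s(\sY), \sF)$, since $\Delta^n$ is connected and $\sF$ is a sheaf of sets.  Hence $\A^1$-locality of $\sF$ amounts to $\Hom_{Sh}(\pi_0^s(\sY \times \A^1), \sF) = \Hom_{Sh}(\pi_0^s(\sY), \sF)$, which, using that $\pi_0^s$ preserves finite products, reduces on representables to $\sF(U) = \sF(U \times \A^1)$, i.e., the hypothesis of $\A^1$-invariance.

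\textbf{Step 2 (derived mapping and conclusion).}  Since every object of $\deltaop$ is cofibrant and $\sF$ is $\A^1$-fibrant by Step 1, we have $[\sY, \sF]_{\A^1} = \pi_0\mathrm{Map}(\sY, \sF) = \Hom_{Sh}(\pi_0^s(\sY), \sF)$ for every $\sY$.  Specialising, $[\sX, \sF]_{\A^1} = \Hom_{\deltaop}(\sX, \sF)$ and $[L_{\A^1}(\sX), \sF]_{\A^1} = \Hom_{Sh}(\pi_0^{\A^1}(\sX), \sF)$.  The $\A^1$-weak equivalence $\sX \to L_{\A^1}(\sX)$ induces a bijection $[L_{\A^1}(\sX), \sF]_{\A^1} \xrightarrow{\sim} [\sX, \sF]_{\A^1}$ by precomposition; combining this with the two identifications above yields the desired natural bijection
\[\Hom_{Sh}(\pi_0^{\A^1}(\sX), \sF) \xrightarrow{\sim} \Hom_{\deltaop}(\sX, \sF),\]
which chases out to precomposition with the canonical map $\sX \to L_{\A^1}(\sX) \to \pi_0^s(L_{\A^1}(\sX)) = \pi_0^{\A^1}(\sX)$.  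This is exactly the stated universal property.

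The main obstacle is Step 1: upgrading $\A^1$-invariance of $\sF$ (a condition on sections over representables) to $\A^1$-fibrancy in the localised model category.  Once this is in hand, both existence and uniqueness of the factorization are consequences of the general model-categorical equivalence induced by $\sX \to L_{\A^1}(\sX)$ together with the discreteness of the simplicial sheaf $\sF$, so no further geometric input is required.
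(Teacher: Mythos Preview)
Your proposal is correct and follows essentially the same route as the paper: both hinge on the fact that an $\A^1$-invariant sheaf of sets, viewed as a constant simplicial sheaf, is $\A^1$-fibrant. The paper simply asserts this as ``since $\sF$ is $\A^1$-invariant, the map $\sF \to L_{\A^1}(\sF)$ is an isomorphism'' and then uses functoriality of $L_{\A^1}$ together with the observation that a map from any simplicial sheaf to a sheaf of simplicial dimension $0$ factors through $\pi_0^s$; you instead verify $\A^1$-fibrancy explicitly and package the conclusion as a bijection on homotopy classes. Your Step~1 is slightly more elaborate than necessary (for a simplicially fibrant object, $\A^1$-locality in the Morel--Voevodsky sense is already the condition $\sF(U)\xrightarrow{\sim}\sF(U\times\A^1)$ on representables, so the detour through general $\sY$ and $\pi_0^s$ preserving products is not needed), but it is not wrong. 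One small advantage of your formulation is that the bijection $\Hom_{Sh}(\pi_0^{\A^1}(\sX),\sF)\cong\Hom_{\deltaop}(\sX,\sF)$ makes uniqueness immediate, whereas the paper's diagram argument addresses existence and leaves uniqueness implicit (relying on $\sX\to\pi_0^{\A^1}(\sX)$ being an epimorphism).
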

\begin{proof}
We have a commutative diagram 
\[
\xymatrix{
\sX \ar[d] \ar[r] & \sF \ar[d] \\
L_{\A^1}(\sX) \ar[r] & L_{\A^1}(\sF)
.} 
\]
\noindent Since $\sF$ is $\A^1$-invariant, the map $\sF \to L_{\A^1}(\sF)$ is an isomorphism.  Hence, the map $\sX \to \sF$ factors through $L_{\A^1}(\sX)$. The map $L_{\A^1}(\sX) \to \sF$ factors through $\pi_0^{\A^1}(\sX) = \pi_0^s(L_{\A^1}(\sX))$ since $\sF$ is of simplicial dimension $0$.  
\end{proof}

We now recall the  construction of the functor $Sing_*$ as given in \cite[p. 87]{Morel-Voevodsky}. Let $\Delta_{\bullet}$ denote the cosimplicial sheaf where 
$$ \Delta_n  = \Spec\left(\frac{k[x_0,...,x_n]}{(\sum_ix_i=1)}\right)$$
with obvious coface and codegeneracy maps motivated from those on topological simplices. For any simplicial presheaf (or a sheaf) $\sX$, define $Sing_*(\sX)$ to be the  diagonal of the bisimplicial presheaf $\underline{\Hom}(\Delta_{\bullet},\sX)$, where $\underline{\Hom}$ denotes the internal Hom.  Concretely,
$$Sing_*(\sX)_n = \underline{\Hom}(\Delta_n,\sX_n).$$ 
Note that if $\sX$ is a sheaf, then so is $Sing_*(\sX)$. There is a functorial morphism $\sX\to Sing_*(\sX)$ induced by $\sX_n(U) \to \sX_n(U\times\Delta_n)$, which is an $\A^1$-weak equivalence.  The functor $Sing_*$ takes $\A^1$-fibrant objects to $\A^1$-fibrant objects and takes $\A^1$-homotopic maps to simplicially homotopic maps.

\begin{defn} \label{definition A1 chain connected components}
Let $\sF$ be a Nisnevich sheaf of sets on $Sm/k$, considered as a simplicially constant sheaf in $\bigtriangleup^{op}Sh(Sm/k)$.  Define $\sS(\sF)$ to be the sheaf associated to the presheaf $\sS^{pre}$ given by
\[
\sS^{pre}(U) := \frac{\sF(U)}{_{\sigma_0}\sF(U\times\A^1)_{\sigma_1}},
\] 
\noindent for $U \in Sm/k$, where $\sigma_0, \sigma_1 : \sF(U \times \A^1) \to \sF(U)$ are the maps induced by the $0$- and $1$-sections $U \to U \times \A^1$, respectively (see Notation \ref{notation1}).  Note that, even if $\sF$ is a sheaf, $\sS^{pre}(\sF)$ need not be a sheaf.  For any $n > 1$, we inductively define the sheaves 
\[
\sS^n(\sF) := \sS(\sS^{n-1}(\sF)). 
\]
\end{defn}

For any sheaf $\sF$, there exists a canonical epimorphism $\sF \to \sS(\sF)$.  This gives a canonical epimorphism $$\sF \to \underset{n}{\varinjlim}~ \sS^n(\sF).$$ 

\begin{rmk} \label{remark-S-Sing}
It is clear from the definition of $Sing_*(\sF)$ that
$$ \psh(Sing_*(\sF))(U) = \frac{\sF(U)}{_{\sigma_0}\sF(U\times\A^1)_{\sigma_1}}.$$
Thus, $\sS(\sF) = \pi_0^s(Sing_*(\sF))$, for any sheaf $\sF$. 
\end{rmk}

\begin{rmk}
Let $X$ be a smooth scheme over $k$ and view it as a Nisnevich sheaf of sets over $Sm/k$.  The sheaf $\sS(X)$ defined above (Definition \ref{definition A1 chain connected components}) is none other than the sheaf $\pi_0^{ch}(X)$ of \emph{$\A^1$-chain connected components} of $X$ introduced by Asok and Morel \cite[Definition 2.2.4]{Asok-Morel}.  We use $\sS$ instead of $\pi_0^{ch}$ in this paper only for typographical reasons.  
\end{rmk}

For smooth schemes over a field $k$, Asok and Morel have conjectured the following (see \cite[Conjecture 2.2.8]{Asok-Morel}):

\begin{conj} \label{conjecture Asok-Morel}
For any smooth scheme $X$ over a field $k$, the natural epimorphism $\sS(X) \to \pi_0^{\A^1}(X)$ is an isomorphism. 
\end{conj}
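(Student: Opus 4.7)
The plan is to exploit the universal property of $\pi_0^{\A^1}$ from Lemma \ref{universal property pi_0A1}. Since the canonical map $\sS(X) \to \pi_0^{\A^1}(X)$ is already a surjection of sheaves, it will be an isomorphism as soon as $\sS(X)$ is itself $\A^1$-invariant: the identity map $\sS(X) \to \sS(X)$ would then factor uniquely through $\sS(X) \to \pi_0^{\A^1}(X)$, producing the required inverse. So the whole problem collapses to proving that $\sS(X)$ is $\A^1$-invariant.

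To attack $\A^1$-invariance I would use the filtration $X \surj \sS(X) \surj \sS^2(X) \surj \cdots$ and its colimit $\sL(X) = \varinjlim_n \sS^n(X)$ from Theorem \ref{Intro-theorem lim S^n}, which is always $\A^1$-invariant. Under Conjecture \ref{a1invariance} one moreover has $\sL(X) = \pi_0^{\A^1}(X)$. Thus, conditional on Morel's conjecture, the surjection $\sS(X) \to \pi_0^{\A^1}(X)$ is an isomorphism precisely when the tower stabilises at the first step, i.e., $\sS(X) = \sS^2(X)$. The task therefore becomes: show that applying $\sS$ twice gives the same sheaf as applying it once.

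Theorem \ref{Intro-theorem field case} already supplies this on field-valued sections: for every finitely generated separable extension $L/k$ one has $\sS(X)(\Spec L) = \sS^n(X)(\Spec L)$, so $\sS(X) = \sS^2(X)$ holds at every generic point. The plan for the main step is to propagate this from field points to an arbitrary henselian local $U = \Spec \sO_{Y,y}^h$, by showing that an element of $\sS^2(X)(U)$, which is represented by two chains of $\A^1$-homotopies in $X$ sharing a common midpoint over $U$, can be replaced by a single direct chain connecting its endpoints. I would carry out the concatenation generically over $L = k(y)$ using Theorem \ref{Intro-theorem field case} and then try to spread the resulting $\A^1$-family across $U$ using properness or smoothness of $X$.

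The main obstacle I expect is precisely this spreading-out step. Over a field one can move freely along chains of $\A^1$'s, but over a positive-dimensional base the intermediate $U$-valued ``points'' are actual sections of $X$ whose moduli need not admit a common $\A^1$-deformation, even when each closed-fiber specialisation does; geometrically this demands a strong fiberwise rational connectedness of the moduli space of length-two $\A^1$-chains in $X$, which is much stronger than rational connectedness of $X$ itself. I expect the argument to go through for proper non-uniruled surfaces, as in Theorem \ref{theorem non-uniruled}, where the absence of rational curves rigidifies the situation drastically, but to be genuinely problematic in higher dimension; if the approach breaks down in dimension $\geq 3$, the natural response would be to look for a smooth proper $X$ for which two chain-classes agree on all field-valued points but become distinguishable on a suitable henselisation, which would produce a counterexample to the conjecture rather than a proof.
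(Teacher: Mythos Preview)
The statement you are trying to prove is a \emph{conjecture}, and the paper does not prove it: it \emph{disproves} it. Section~\ref{section counterexamples} constructs a smooth projective variety $X$ over $\C$ for which $\sS(X)\to\pi_0^{\A^1}(X)$ fails to be a monomorphism. So your proposed proof cannot possibly be completed; the spreading-out step you flag as the main obstacle is not merely difficult but genuinely impossible in general.

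That said, your analysis is quite accurate as a diagnosis. Your reduction to the question $\sS(X)=\sS^2(X)$ is exactly the viewpoint the paper takes (see Lemma~\ref{lemma strategy} and the surrounding discussion), and your final paragraph lands on precisely the correct conclusion: one should look for a smooth proper $X$ and two $U$-points that are $\A^1$-ghost homotopic but not $\A^1$-chain homotopic over some henselian local $U$. The paper does exactly this. It first builds a singular affine surface $S_1\subset\A^3_\C$ (a degenerating family of elliptic curves) containing two points $p_1,q$ that are connected by an elementary $\A^1$-ghost homotopy but not by any $\A^1$-chain homotopy, because every $\A^1$ through $p_1$ is trapped in a punctured rational curve. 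It then embeds $S_1$ into a smooth variety $T$ in which every $\A^1$ meeting $S_1$ already lies in $S_1$ (Lemma~\ref{lemma rigid embedding}), compactifies, takes the product with a positive-genus curve $C$, and blows up two carefully chosen points. Over the henselisation $U$ of $C$ at a point, the resulting sections $\xi_{p_1},\xi_q$ inherit the ghost homotopy but the blow-ups obstruct any chain homotopy on the closed fibre.

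One further small gap in your write-up: you invoke Theorem~\ref{Intro-theorem field case} for an arbitrary smooth $X$, but that theorem requires $X$ to be \emph{proper} (more precisely, the sheaf to be almost proper). For non-proper smooth $X$ even the field-point statement $\sS(X)(L)=\sS^2(X)(L)$ is not available from the paper.
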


Asok and Morel also mention that this would be true if one proves that $Sing_*(X)$ is $\A^1$-local, for any smooth scheme $X$ over $k$ (see \cite[Remark 2.2.9]{Asok-Morel}).  However, in Section \ref{section counterexamples}, we give examples of schemes for which this property and Conjecture \ref{conjecture Asok-Morel} fail to hold.

\subsection{Remarks on Morel's conjecture on \texorpdfstring{$\A^1$}{A1}-invariance of \texorpdfstring{$\pi_0^{\A^1}(\sX)$}{pi0A1(X)}}

In this subsection, we use the sheaf of $\A^1$-chain connected components of a (Nisnevich) sheaf $\sF$ on $Sm/k$ to construct a sheaf $\sL(\sF)$ that is closely related to $\pi_0^{\A^1}(\sF)$ and prove Theorem \ref{Intro-theorem lim S^n} stated in the introduction.

\begin{thm}\label{theorem lim S^n}
For any sheaf of sets $\sF$ on $Sm/k$, the sheaf $$\sL(\sF) := \underset{n}{\varinjlim}~ S^n(\sF)$$ is $\A^1$-invariant.
\end{thm}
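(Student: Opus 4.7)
The goal is to show that for every smooth $U$ the projection $p:U\times\A^1\to U$ induces a bijection $p^*:\sL(\sF)(U)\to\sL(\sF)(U\times\A^1)$. Injectivity is automatic: the zero section $\sigma_0:U\to U\times\A^1$ satisfies $p\circ\sigma_0=\mathrm{id}_U$, so $\sigma_0\circ p^*$ is the identity on sections. The real content is surjectivity, for which the colimit structure of $\sL(\sF)$ is essential.

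\textbf{Key observation.} The crucial point is that on $\sL(\sF)$ the evaluations at $0$ and $1$ already agree: for every smooth $V$,
$$\sigma_0 \,=\, \sigma_1 \;:\; \sL(\sF)(V\times\A^1) \longrightarrow \sL(\sF)(V).$$
By the very construction of $\sS$, for any sheaf $\sG$ the two maps $\sigma_0,\sigma_1:\sG(V\times\A^1)\to\sG(V)$ become equal after composition with the canonical epimorphism $\sG(V)\to\sS(\sG)(V)$, since they are identified in $\sS^{pre}(\sG)$ and sheafification preserves this. Applied to $\sG=\sS^n(\sF)$, this says that $\sigma_0$ and $\sigma_1$ coincide once followed by the transition $\sS^n(\sF)(V)\to\sS^{n+1}(\sF)(V)$. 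The equality then passes to the colimit, using that every smooth $k$-scheme is quasi-compact and admits finite Nisnevich refinements, so that filtered colimits of Nisnevich sheaves on $Sm/k$ are computed sectionwise; in particular $\sL(\sF)(W)=\varinjlim_n\sS^n(\sF)(W)$ for every $W$.

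\textbf{Surjectivity via a multiplication homotopy.} Given $h\in\sL(\sF)(U\times\A^1)$, the natural pre-image candidate is $a:=\sigma_0(h)\in\sL(\sF)(U)$, and one must verify $p^*(a)=h$. Consider the multiplication map
$$\mu:U\times\A^1\times\A^1\longrightarrow U\times\A^1,\qquad (u,s,t)\mapsto(u,st),$$
and set $H:=\mu^*(h)\in\sL(\sF)(U\times\A^1\times\A^1)$. Viewing the last $\A^1$-factor as the parameter for the sections $\sigma_0,\sigma_1$ over $V:=U\times\A^1$, one readily checks that $\mu\circ\sigma_0=\sigma_0\circ p$ and $\mu\circ\sigma_1=\mathrm{id}_V$. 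Consequently $\sigma_0(H)=p^*(a)$ and $\sigma_1(H)=h$ in $\sL(\sF)(V)$, and the key observation forces $p^*(a)=h$.

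\textbf{Where the effort lies.} The geometric heart (contracting $\A^1$ by multiplication) is the entirely standard homotopy between the identity and the zero map on $\A^1$; the essential point that makes this work for $\sL(\sF)$ but not already for $\sS(\sF)$ is that one has \emph{two} identifications $\sigma_0=\sigma_1$ available, one to go from level $n$ to level $n+1$ and another to compare the endpoints of $H$. The only bookkeeping that requires genuine care is the interaction between the sequential colimit defining $\sL(\sF)$ and the Nisnevich sheaf condition, which is why the Noetherianity of the site matters.
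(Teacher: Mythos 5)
Your proof is correct and uses essentially the same argument as the paper: contract $\A^1$ via the multiplication map $\mu$ and exploit the identification $\sigma_0=\sigma_1$ that each application of $\sS$ imposes; the paper just phrases this at a finite stage, showing that the image of $t_n\in\sS^n(\sF)(U\times\A^1)$ in $\sS^{n+1}(\sF)(U\times\A^1)$ equals $p^*\sigma_0^*(t_n)$, rather than stating the identification on $\sL(\sF)$ as a separate lemma. One small inaccuracy in your closing commentary: the argument uses a \emph{single} application of $\sigma_0=\sigma_1$ (to $H=\mu^*(h)$), not two; what makes it work for $\sL(\sF)$ and not $\sS(\sF)$ is simply that the transition map $\sS^n\to\sS^{n+1}$, where the identification takes place, becomes an isomorphism after passing to the colimit.
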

\begin{proof}
It is enough to show that for every smooth $k$-scheme $U$, the map
$$\sL(\sF)(U)\to \sL(\sF)(U\times\A^1)$$ is surjective, since it is already injective because the projection map $U \times \A^1 \to U$ admits a section.
Let $t\in \sL(\sF)(U\times \A^1)$.  Since the sheaf $\sL(\sF)$ is a filtered colimit, we have for each $U \in Sm/k$
$$ \sL(\sF)(U) = \dlim_{n \in \N} \sS^n(\sF)(U).$$
Thus, $t$ is represented by an element $t_n$ of $\sS^n(\sF)(U\times \A^1)$ for some $n$. 
We will show that $t$ is contained in the image of $\sL(\sF)(U)$ by showing that the image of $t_n$ in $\sS^{n+1}(U \times \A^1)$ is contained in the image of $\sS^{n+1}(\sF)(U)$.  Let $m$ denote the \emph{multiplication} map 
$$ U\times \A^1 \times \A^1 \to U\times \A^1  \ \ ; \ \ (u,x,y) \mapsto (u,xy)$$
Consider the element $$m^*(t_n) \in \sS^n(\sF)(U\times \A^1\times \A^1).$$
Then $$\sigma_1^*\circ m^* (t_n) = t_n, \ \ \text{and} \ \ \sigma_0^*\circ m^*(t_n) = p^*\circ \sigma_0^*(t_n)$$
where $U\times \A^1 \stackrel{p}{\to} U$ is the projection. Thus, the image of $t_n$ in $\sS^{n+1}(\sF)(U\times \A^1)$ is contained in the image of the map $$\sS^{n+1}(\sF)(U) \to \sS^{n+1}(\sF)(U\times \A^1).$$ This proves the result.
\end{proof}

\begin{rmk}\label{remark factorization through pi_0-a1}
In view of Theorem \ref{theorem lim S^n}, the canonical map $\sF \to \sL(\sF)$ uniquely factors through the canonical map $\sF \to \pi_0^{\A^1}(\sF)$.   
\end{rmk}

\begin{rmk}\label{remark universal property L(F)}
Note that $\sL(\sF)$ satisfies the following universal property: any map from a sheaf $\sF$ to an $\A^1$-invariant sheaf uniquely factors through the canonical map $\sF \to \sL(\sF)$.  Recall that $\pi_0^{\A^1}$ also satisfies the same universal property (Lemma \ref{universal property pi_0A1}) but is not known to be $\A^1$-invariant in general.
\end{rmk}

We now see how Theorem \ref{theorem lim S^n} gives us a reformulation of Morel's conjecture on the $\A^1$-invariance of the $\A^1$-connected components sheaf $\pi_0^{\A^1}$.

\begin{lem}\label{a1-invariance-equiv-sheaves}
Let $\sF$ be a sheaf of sets on $Sm/k$, considered as an object of $\bigtriangleup^{op}\Sh(Sm/k)$.  The following are equivalent:\\
\noindent $(1)$ The sheaf $\pi_0^{\A^1}(\sF)$ is $\A^1$-invariant. \\
\noindent $(2)$ The canonical map $$\pi_0^{\A^1}(\sF) \to \sL(\sF)$$ admits a retract. 
\end{lem}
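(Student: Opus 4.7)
The plan is to exploit the dual universal properties available: by Lemma \ref{universal property pi_0A1}, any map from $\sF$ to an $\A^1$-invariant sheaf factors uniquely through $\sF \to \pi_0^{\A^1}(\sF)$, while by Remark \ref{remark universal property L(F)}, any such map factors uniquely through $\sF \to \sL(\sF)$. The key asymmetry is that $\sL(\sF)$ is known to be $\A^1$-invariant (Theorem \ref{theorem lim S^n}) whereas $\pi_0^{\A^1}(\sF)$ is not. So the retraction in $(2)$ is really a device to transfer $\A^1$-invariance from $\sL(\sF)$ onto $\pi_0^{\A^1}(\sF)$.

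For the implication $(2) \Rightarrow (1)$, I would show that $\A^1$-invariance is inherited by retracts of sheaves of sets. Given a retraction $r : \sL(\sF) \to \pi_0^{\A^1}(\sF)$ of the canonical map $j : \pi_0^{\A^1}(\sF) \to \sL(\sF)$, pick $U \in Sm/k$, write $\pi : U \times \A^1 \to U$ for the projection, and take any $a \in \pi_0^{\A^1}(\sF)(U \times \A^1)$. Since $\sL(\sF)$ is $\A^1$-invariant, there is a unique $b \in \sL(\sF)(U)$ with $\pi^* b = j(a)$; then $a = r(j(a)) = r(\pi^* b) = \pi^*(r(b))$, so $\pi^*$ is surjective on $\pi_0^{\A^1}(\sF)$. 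Injectivity of $\pi^*$ follows as usual from the existence of the zero section $U \to U \times \A^1$. Thus $\pi_0^{\A^1}(\sF)$ is $\A^1$-invariant.

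For $(1) \Rightarrow (2)$, I would construct the retraction from the universal property. Assuming $\pi_0^{\A^1}(\sF)$ is $\A^1$-invariant, apply Remark \ref{remark universal property L(F)} to the canonical morphism $\sF \to \pi_0^{\A^1}(\sF)$ to obtain a factoring morphism $r : \sL(\sF) \to \pi_0^{\A^1}(\sF)$. To check $r \circ j = \mathrm{id}_{\pi_0^{\A^1}(\sF)}$, observe that by Remark \ref{remark factorization through pi_0-a1} the composition $\sF \to \pi_0^{\A^1}(\sF) \to \sL(\sF)$ equals the canonical map $\sF \to \sL(\sF)$; composing further with $r$ returns the canonical map $\sF \to \pi_0^{\A^1}(\sF)$. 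Hence both $r \circ j$ and $\mathrm{id}$ are morphisms $\pi_0^{\A^1}(\sF) \to \pi_0^{\A^1}(\sF)$ whose precomposition with $\sF \to \pi_0^{\A^1}(\sF)$ agrees. Since the target is $\A^1$-invariant by hypothesis, Lemma \ref{universal property pi_0A1} supplies the uniqueness needed to conclude $r \circ j = \mathrm{id}$.

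There is no real geometric obstacle here; the statement is entirely formal once the two universal properties are in hand. The one point to handle with care is that the uniqueness clause of Lemma \ref{universal property pi_0A1} is available in the direction $(1) \Rightarrow (2)$ precisely because of hypothesis $(1)$, so the logical order cannot be reversed.
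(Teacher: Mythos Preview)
Your proof is correct and follows essentially the same approach as the paper: the paper's argument is a terse two-sentence version invoking precisely the same ingredients (Remark \ref{remark universal property L(F)} and Lemma \ref{universal property pi_0A1} for $(1)\Rightarrow(2)$, Theorem \ref{theorem lim S^n} and closure of $\A^1$-invariance under retracts for $(2)\Rightarrow(1)$). You have simply spelled out the details, including the explicit verification that retracts inherit $\A^1$-invariance and the uniqueness argument pinning down $r\circ j=\mathrm{id}$.
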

\begin{proof}
\noindent $(1) \Rightarrow (2)$ is straightforward by Remark \ref{remark universal property L(F)} and the universal property of $\pi_0^{\A^1}$ (Lemma \ref{universal property pi_0A1}). $(2) \Rightarrow (1)$ follows since $\sL(\sF)$ is $\A^1$-invariant (Theorem \ref{theorem lim S^n}) and since a retract of an $\A^1$-invariant sheaf is $\A^1$-invariant.
\end{proof}

\begin{prop}\label{a1-invariance-equiv-spaces}
The following are equivalent:\\
\noindent $(1)$ The sheaf $\pi_0^{\A^1}(\sX)$ is $\A^1$-invariant, for all $\sX \in \bigtriangleup^{op}\Sh(Sm/k)$. \\
\noindent $(2)$ The canonical map $\pi_0^{\A^1}(\sX) \to \pi_0^{\A^1}(\pi_0^s(\sX))$ is an isomorphism, for all $\sX \in \bigtriangleup^{op}\Sh(Sm/k)$.
\end{prop}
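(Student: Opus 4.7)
The strategy is to use the universal properties of $\pi_0^s$ and $\pi_0^{\A^1}$ (Lemma \ref{universal property pi_0A1}) together with Theorem \ref{theorem lim S^n} and Lemma \ref{a1-invariance-equiv-sheaves}.

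For $(1) \Rightarrow (2)$: Under (1), both $\pi_0^{\A^1}(\sX)$ and $\pi_0^{\A^1}(\pi_0^s(\sX))$ are $\A^1$-invariant sheaves. The canonical map $\eta_\sX \colon \sX \to \pi_0^{\A^1}(\sX)$ targets a sheaf of sets, hence factors uniquely through $\sX \to \pi_0^s(\sX)$ as some $f \colon \pi_0^s(\sX) \to \pi_0^{\A^1}(\sX)$. Since $\pi_0^{\A^1}(\sX)$ is $\A^1$-invariant, Lemma \ref{universal property pi_0A1} applied to $f$ produces a unique factorization $f = g \circ \eta_{\pi_0^s(\sX)}$ for some $g \colon \pi_0^{\A^1}(\pi_0^s(\sX)) \to \pi_0^{\A^1}(\sX)$. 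I will show that $g$ is a two-sided inverse to the canonical map $h \colon \pi_0^{\A^1}(\sX) \to \pi_0^{\A^1}(\pi_0^s(\sX))$ induced by functoriality; the equalities $g \circ h = \mathrm{id}$ and $h \circ g = \mathrm{id}$ are verified by precomposing with $\eta_\sX$ and $\eta_{\pi_0^s(\sX)}$ respectively, tracing through the definitions, and invoking the uniqueness assertion in Lemma \ref{universal property pi_0A1}, which is available thanks to (1).

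For $(2) \Rightarrow (1)$: Given (2), the isomorphism $\pi_0^{\A^1}(\sX) \cong \pi_0^{\A^1}(\pi_0^s(\sX))$ reduces the problem to showing that $\pi_0^{\A^1}(\sF)$ is $\A^1$-invariant for any sheaf of sets $\sF$. By Lemma \ref{a1-invariance-equiv-sheaves}, this amounts to constructing a retraction of the canonical map $i \colon \pi_0^{\A^1}(\sF) \to \sL(\sF)$. Apply (2) to $\sX = Sing_*(\sF)$: using that $\sF \to Sing_*(\sF)$ is an $\A^1$-weak equivalence and that $\pi_0^s(Sing_*(\sF)) = \sS(\sF)$ (Remark \ref{remark-S-Sing}), we obtain a natural isomorphism $\pi_0^{\A^1}(\sF) \cong \pi_0^{\A^1}(\sS(\sF))$ induced by the canonical map $\sF \to \sS(\sF)$. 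Iterating the same argument with $\sS^{n-1}(\sF)$ in place of $\sF$ produces compatible isomorphisms $\pi_0^{\A^1}(\sF) \cong \pi_0^{\A^1}(\sS^n(\sF))$ for all $n \geq 1$. Composing the canonical maps $\sS^n(\sF) \to \pi_0^{\A^1}(\sS^n(\sF))$ with the inverse isomorphisms to $\pi_0^{\A^1}(\sF)$ gives a compatible family of maps $\psi_n \colon \sS^n(\sF) \to \pi_0^{\A^1}(\sF)$, which assemble into the desired map $r \colon \sL(\sF) = \varinjlim_n \sS^n(\sF) \to \pi_0^{\A^1}(\sF)$.

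The main technical point is verifying that $r$ is indeed a retraction, i.e., $r \circ i = \mathrm{id}_{\pi_0^{\A^1}(\sF)}$. By naturality of the $\psi_n$ and of the canonical maps, both $r \circ i$ and $\mathrm{id}$ agree after precomposition with $\eta_\sF \colon \sF \to \pi_0^{\A^1}(\sF)$. Promoting this agreement to full equality of the two endomorphisms of $\pi_0^{\A^1}(\sF)$ uses that $\eta_\sF$ is an epimorphism of sheaves, and this is the key technical input I expect to require the most care in the actual proof. Once the retraction is in place, Lemma \ref{a1-invariance-equiv-sheaves} completes the argument.
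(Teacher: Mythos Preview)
Your proposal is correct and follows essentially the same route as the paper: both directions use the universal property of $\pi_0^{\A^1}$ (Lemma~\ref{universal property pi_0A1}) for $(1)\Rightarrow(2)$, and for $(2)\Rightarrow(1)$ both apply $(2)$ to $Sing_*(\sS^{n-1}(\sF))$ iteratively (using Remark~\ref{remark-S-Sing}) to build compatible maps $\sS^n(\sF)\to\pi_0^{\A^1}(\sF)$, pass to the colimit, and invoke Lemma~\ref{a1-invariance-equiv-sheaves}. The paper is terser about verifying that the colimit map is actually a retraction, while you correctly identify that this uses the fact that $\sF\to\pi_0^{\A^1}(\sF)$ is an epimorphism (the unstable $\A^1$-connectivity theorem, \cite[\S2, Corollary~3.22]{Morel-Voevodsky}, which the paper cites explicitly only in the $(1)\Rightarrow(2)$ direction).
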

\begin{proof}
\noindent $(1) \Rightarrow (2):$  Consider the following commutative diagram with the natural morphisms 
\[
\xymatrix{
\sX \ar[r] & \pi_0^s(\sX) \ar[r] \ar[d] & \pi_0^{\A^1}(\sX) \ar[dl] \\
           & \pi_0^{\A^1}(\pi_0^s(\sX)) &   }
\]
\noindent All the morphisms in the above diagram are epimorphisms (the right horizontal map is an epimorphism by the unstable $\A^1$-connectivity theorem \cite[\textsection 2, Corollary 3.22]{Morel-Voevodsky}).  Since $\pi_0^{\A^1}(\sX)$ is $\A^1$-invariant by assumption, the map $\pi_0^s(\sX) \to \pi_0^{\A^1}(\sX)$ has to uniquely factor through $\pi_0^{\A^1}(\pi_0^s(\sX))$, by the universal property of $\pi_0^{\A^1}(\pi_0^s(\sX))$.  This gives an inverse to the map $\pi_0^{\A^1}(\sX) \to \pi_0^{\A^1}(\pi_0^s(\sX))$, by uniqueness. \\
\noindent $(2) \Rightarrow (1):$ Let $\sX$ be a space and let $\sF := \pi_0^s(\sX)$ be the sheaf of (simplicial) connected components of $\sX$.  We are given that $\pi_0^{\A^1}(\sX) \to \pi_0^{\A^1}(\sF)$ is an isomorphism.  We will prove that $\pi_0^{\A^1}(\sF)$ is $\A^1$-invariant.  We have a natural map $$\sS(\sF) = \pi_0^s(Sing_*(\sF)) \to \pi_0^{\A^1}(Sing_*(\sF)) \simeq \pi_0^{\A^1}(\sF).$$  By Remark \ref{remark-S-Sing}, we have 
\[
\sS^2(\sF) =  \sS (\pi_0^s(Sing_*(\sF))) = \pi_0^s(Sing_* \pi_0^s(Sing_*(\sF))), 
\]
\noindent whence we have a map $\sS^2(\sF) \to \pi_0^{\A^1}(Sing_* \pi_0^s(Sing_*(\sF)))$.  But hypothesis $(2)$ implies that
\[
\pi_0^{\A^1}(Sing_* \pi_0^s(Sing_*(\sF))) \simeq \pi_0^{\A^1}(\pi_0^s(Sing_*(\sF))) \simeq \pi_0^{\A^1}(Sing_*(\sF)) \simeq \pi_0^{\A^1}(\sF).
\]
\noindent  The composition of these maps gives a natural map $\sS^2(\sF) \to \pi_0^{\A^1}(\sF)$, which makes the following diagram commute, where the maps are the ones defined above.
\[
\xymatrix{
\sS(\sF) \ar[dr] \ar[rr] &   & \sS^2(\sF) \ar[dl]\\
  {}           & \pi_0^{\A^1}(\sF) & {}
}
\]
\noindent Continuing in this way, we obtain a compatible family of maps $\sS^i({\sF}) \to \pi_0^{\A^1}(\sF)$, for each $i$, giving a retract $\sL(\sF) \to \pi_0^{\A^1}(\sF)$.  Lemma \ref{a1-invariance-equiv-sheaves} now implies that $\pi_0^{\A^1}(\sF)$ is $\A^1$-invariant, proving $(1)$.
\end{proof}

These results immediately give a description of the $\A^1$-connected components sheaf $\pi_0^{\A^1}(\sX)$ for a simplicial sheaf of sets $\sX$ on $Sm/k$, provided Morel's conjecture is true.

\begin{cor}\label{a1-invariance-candidate}
Suppose Conjecture \ref{a1invariance} is true. Let $\sX$ be a simplicial sheaf of sets on $Sm/k$ and let $\sF := \pi_0^s(\sX)$.  Then the canonical maps $$\pi_0^{\A^1}(\sX) \to \pi_0^{\A^1}(\sF) \to \sL(\sF)$$ are isomorphisms.
\end{cor}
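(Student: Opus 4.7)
The plan is to derive both isomorphisms essentially formally from the universal properties established earlier in the section, together with the standing assumption that Conjecture \ref{a1invariance} holds. The first arrow is handled directly by Proposition \ref{a1-invariance-equiv-spaces}, while the second arrow is produced by playing the universal property of $\pi_0^{\A^1}$ (Lemma \ref{universal property pi_0A1}) against the universal property of $\sL$ (Remark \ref{remark universal property L(F)}).

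For the first isomorphism, I would invoke Conjecture \ref{a1invariance} to conclude that condition $(1)$ of Proposition \ref{a1-invariance-equiv-spaces} holds, which gives condition $(2)$: the canonical map $\pi_0^{\A^1}(\sX) \to \pi_0^{\A^1}(\pi_0^s(\sX)) = \pi_0^{\A^1}(\sF)$ is an isomorphism. This step is simply a citation of that proposition.

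For the second isomorphism, I would observe that under the standing assumption the sheaf $\pi_0^{\A^1}(\sF)$ is $\A^1$-invariant. Since $\sL(\sF)$ is also $\A^1$-invariant by Theorem \ref{theorem lim S^n}, two universal property arguments run in parallel. On the one hand, the canonical map $\sF \to \sL(\sF)$ factors uniquely through $\sF \to \pi_0^{\A^1}(\sF)$, producing a map $\alpha \colon \pi_0^{\A^1}(\sF) \to \sL(\sF)$ compatible with the canonical maps from $\sF$; this is the map already present in the statement. On the other hand, applying Remark \ref{remark universal property L(F)} to the $\A^1$-invariant sheaf $\pi_0^{\A^1}(\sF)$, the canonical map $\sF \to \pi_0^{\A^1}(\sF)$ factors uniquely through $\sF \to \sL(\sF)$, producing a map $\beta \colon \sL(\sF) \to \pi_0^{\A^1}(\sF)$ compatible with the canonical maps from $\sF$. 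Then $\beta \circ \alpha$ and $\alpha \circ \beta$ are, by construction, endomorphisms of $\pi_0^{\A^1}(\sF)$ and $\sL(\sF)$, respectively, each compatible with the canonical map out of $\sF$; applying the uniqueness clauses of the respective universal properties to the identity maps forces both composites to equal the identity.

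There is no real obstacle beyond verifying that the two universal property arguments actually cohere; the mildly delicate point is to check that the resulting endomorphisms $\beta \circ \alpha$ and $\alpha \circ \beta$ satisfy the hypothesis of the uniqueness clause (i.e., that they are compatible with the appropriate canonical map from $\sF$), which is immediate from the construction of $\alpha$ and $\beta$ as factorizations of those canonical maps. The whole argument is thus a short diagram chase once Proposition \ref{a1-invariance-equiv-spaces} and Theorem \ref{theorem lim S^n} are in hand.
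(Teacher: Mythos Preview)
Your proposal is correct and is exactly the argument the paper has in mind: the corollary is stated without proof, immediately after Lemma \ref{a1-invariance-equiv-sheaves} and Proposition \ref{a1-invariance-equiv-spaces}, with the remark that ``these results immediately give'' the description. Your unpacking via Proposition \ref{a1-invariance-equiv-spaces} for the first map and the two universal properties (Lemma \ref{universal property pi_0A1} and Remark \ref{remark universal property L(F)}) for the second is precisely the intended route; one could equally phrase the second step via Lemma \ref{a1-invariance-equiv-sheaves} together with the observation that $\pi_0^{\A^1}(\sF) \to \sL(\sF)$ is an epimorphism, but this amounts to the same thing.
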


\section{\texorpdfstring{$\A^1$}{A1}-connected/chain-connected components of schemes}
\label{Section Asok-Morel}

\subsection{Sections of \texorpdfstring{$\pi_0^{\A^1}(X)$}{pi0A1(X)} over finitely generated field extensions of \texorpdfstring{$k$}{k}}
\label{subsection field case}

The aim of this subsection is to give a proof of Theorem \ref{Intro-theorem field case} stated in the introduction.  The strategy is the one suggested by Theorem \ref{theorem lim S^n}: we shall prove that for a proper scheme $X$ over $k$, $\sS(X)(\Spec L) = \sS^2(X)(\Spec L)$, for every field extension $L$ of $k$. 

Let $\sF$ be any sheaf of sets on $Sm/k$. In order to address the question of $\A^1$-invariance of $\sS(\sF)$, we need to obtain an explicit description for the elements of the set $\sS(\sF)(U)$ where $U$ is a smooth scheme over $k$.  We will then specialize to the case when $\sF$ is represented by a proper scheme $X$ over $k$.

As before, for any scheme $U$ over $k$, we let $\sigma_0$ and $\sigma_1$ denote the morphisms $U \to U \times \A^1$ given by $u \mapsto (u,0)$ and $u \mapsto (u,1)$, respectively. 

\begin{defn}
\label{definition homotopy}
Let $\sF$ be a sheaf of sets over $Sm/k$ and let $U$ be a smooth scheme over $k$.
\begin{itemize}
\item[(1)] An \emph{$\A^1$-homotopy} of $U$ in $\sF$ is a morphism an element $h$ of $\sF(U \times \A^1_{k})$. We say that $t_1, t_2 \in \sF(U)$ are \emph{$\A^1$-homotopic} if there exists an $\A^1$-homotopy $h \in \sF(U \times \A^1)$ such that $\sigma_0^*(h) = t_1$ and $\sigma_1^*(h)= t_2)$.
\item[(2)] An \emph{$\A^1$-chain homotopy} of $U$ in $\sF$ is a finite sequence $h=(h_1, \ldots, h_n)$ where each $h_i$ is an $\A^1$-homotopy of $U$ in $\sF$ such that $\sigma_1^*(h_i) = \sigma_0^*(h_{i+1})$ for $1 \leq i \leq n-1$. We say that $t_1, t_2 \in \sF(U)$ are \emph{$\A^1$-chain homotopic} if there exists an $\A^1$-chain homotopy $h=(h_1, \ldots, h_n)$ such that $\sigma_0^*(h_1) = t_1$ and $\sigma_1^*(h_n) = t_2$. 
\end{itemize}
\end{defn}

Clearly, if $t_1$ and $t_2$ are $\A^1$-chain homotopic, they map to the same element of $\sS(\sF)(U)$. The converse is partially true - if $t_1, t_2 \in \sF(U)$ are such that they map to the same element in $\sS(\sF)(U)$, then there exists a Nisnevich cover $V \to U$ such that $t_1|_{V}$ and $t_2|_{V}$ are $\A^1$-chain homotopic. 

Now we explicitly describe the sections of $\sS(\sF)$. Let $t \in \sS(\sF)(U)$. Since $\sF \to \sS(\sF)$ is an epimorphism of sheaves, there exists a finite Nisnevich cover $V \to U$ such that $t|_{V}$ can be lifted to $s \in \sF(V)$. Let $pr_1,pr_2: V \times_{U} V \to V$ be the two projections. Then, since the two elements $pr_i^*(s)$ map to the same element in $\sS(\sF)(V \times_U V)$, there exists a finite Nisnevich cover $W \to V \times_U V$ such that $pr_1^*(s)|_{W}$ and $pr_2^*(s)|_{W}$ are $\A^1$-chain homotopic.  Conversely, given a finite cover $V \to U$ and an element $s$ of $\sF(V)$ such that $pr_1^*(s)$ and $pr_2^*(s)$ become $\A^1$-chain homotopic after restricting to a Nisnevich cover of $V \times_U V$, we can obtain a unique element $t$ of $\sS(\sF)(U)$. 

Applying the same argument to to the sections of $\sS(\sF)$ over $\A^1_U$, we are led to define the notion of an \emph{$\A^1$-ghost homotopy}.

\begin{defn}
\label{definition ghost homotopy}
Let $\sF$ be a sheaf of sets and let $U$ be a smooth scheme over $k$. An \emph{$\A^1$-ghost homotopy} of $U$ in $\sF$ consists of the data 
\[
\sH:=(V \to \A^1_U, W \to V \times_{\A^1_U} V, h, h^W)
\]
which is defined as follows:
\begin{itemize}
\item[(1)] $V \to \A^1_U$ is a Nisnevich cover of $\A^1_U$.
\item[(2)] $W \to V \times_{\A^1_U} V$ is a Nisnevich cover of $V \times_{\A^1_U} V$.  
\item[(3)] $h$ is a morphism $V \to \sF$.
\item[(4)] $h^W$ is an $\A^1$-chain homotopy connecting the two morphisms $W \to V\times_{\A^1_U} V  \overset{pr_i}{\to} V \to F$ where $pr_1$ and $pr_2$ are the projections $V \times_{\A^1_U} V \to V$. (Thus $h^W$ is a finite sequence $(h^{W}_1, \ldots )$ of $\A^1$-homotopies satisfying the appropriate conditions as given in Definition \ref{definition homotopy}.)
\end{itemize}
Let $t_1, t_2 \in \sF(U)$. We say that \emph{$\sH$ connects $t_1$ and $t_2$} (which are then said to be \emph{$\A^1$-ghost homotopic}) if the morphisms $\sigma_0, \sigma_1: U \to \A^1_U$ admit lifts $\~\sigma_0: U \to V$ and $\~\sigma_1: U \to V$ such that $h \circ \~\sigma_0 = t_1$ and $h \circ \~\sigma_1 = t_2$. 
\end{defn}

By the discussion above, a homotopy in $\sS(\sF)$ gives rise (non-uniquely) to a ghost homotopy in $\sF$. On the other hand, a ghost homotopy in $\sF$ gives rise to a unique homotopy in $S(\sF)$. Also, if $t_1$, $t_2$ are $\A^1$-ghost homotopic elements of $\sF(U)$, then it is clear that their images in $\sS(\sF)(U)$ are $\A^1$-homotopic. The converse is partially true -- if $\~{t_1}, \~{t_2} \in \sS(\sF)(U)$ are $\A^1$-homotopic, then they have preimages $t_1, t_2 \in \sF(U)$ which become $\A^1$-ghost homotopic over some Nisnevich cover of $U$. In general, if $\sH$ is an $\A^1$-ghost homotopy of $U$, it is possible that there may not exist two elements $t_1, t_2 \in \sF(U)$ which are connected by $\sH$. This is because the lifts $\~\sigma_0$ and $\~\sigma_1$ may not exist until we pass to a Nisnevich cover of $U$. However, if $U$ is a smooth Henselian local scheme over $k$, then the morphisms $\~\sigma_0$ and $\~\sigma_1$ do exist and thus there exist preimages of $\~{t_1}$ and $\~{t_2}$ which are $\A^1$-
ghost homotopic. 

From the above discussion, we obtain the following obvious lemma:
\begin{lemma}
\label{lemma strategy}
Let $\sF$ be a sheaf of sets over $Sm/k$. Then, $\sS(\sF) = \sS^2(\sF)$ if and only if for every smooth Henselian local scheme $U$, if $t_1, t_2 \in \sF(U)$ are $\A^1$-ghost homotopic, then they are also $\A^1$-chain homotopic. 
\end{lemma}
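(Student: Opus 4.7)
The plan is to reduce the sheaf equality $\sS(\sF) = \sS^2(\sF)$ to its stalks at smooth Henselian local schemes, where it can be tested on honest sections, and then to unwind the definitions using the dictionary between homotopies in $\sS(\sF)$ and ghost homotopies in $\sF$ developed in the discussion preceding the lemma. Since $\sS(\sF) \to \sS^2(\sF)$ is always an epimorphism of Nisnevich sheaves, the equality holds if and only if this map is injective on sections over every smooth Henselian local $U$. At such $U$ every Nisnevich cover admits a section, which is the single feature of Henselian local schemes that will be used throughout.

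For the forward direction, let $t_1, t_2 \in \sF(U)$ be $\A^1$-ghost homotopic with $U$ smooth Henselian local. The ghost homotopy produces a single $\A^1$-homotopy $H \in \sS(\sF)(U \times \A^1)$ between the images $\tilde t_1, \tilde t_2$ of $t_1, t_2$ in $\sS(\sF)(U)$, so $\tilde t_1$ and $\tilde t_2$ agree in $\sS^2(\sF)(U)$, hence by assumption already in $\sS(\sF)(U)$. Unwinding the definition of $\sS(\sF)$ as the sheafification of the chain-homotopy quotient of $\sF$, this means $t_1$ and $t_2$ become $\A^1$-chain homotopic after restriction to some Nisnevich cover $V \to U$; picking a section of $V \to U$ and pulling the chain homotopy back along it yields an $\A^1$-chain homotopy between $t_1$ and $t_2$ on $U$ itself.

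For the converse, suppose the geometric condition holds and let $\tilde t_1, \tilde t_2 \in \sS(\sF)(U)$ agree in $\sS^2(\sF)(U)$ for $U$ smooth Henselian local. Using the section-splitting property of $U$ once more, we obtain a chain of $\A^1$-homotopies $H_1, \ldots, H_n$ in $\sS(\sF)$ over $U$ itself connecting $\tilde t_1$ to $\tilde t_2$. Each link $H_i$ lifts---again because $U$ is Henselian local, so that the required lifts $\tilde\sigma_0, \tilde\sigma_1$ in Definition \ref{definition ghost homotopy} exist---to an $\A^1$-ghost homotopy in $\sF$ over $U$ between some $s_i, s_i' \in \sF(U)$ mapping to the two endpoints of $H_i$. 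By hypothesis, $s_i$ and $s_i'$ are $\A^1$-chain homotopic in $\sF$, so their images coincide in $\sS(\sF)(U)$; hence the endpoints of each $H_i$ are equal in $\sS(\sF)(U)$, and concatenating yields $\tilde t_1 = \tilde t_2$. I anticipate no substantial obstacle here: the lemma is essentially a repackaging of the correspondence between ghost homotopies in $\sF$ and homotopies in $\sS(\sF)$ already in place, with the single recurring ingredient being the splitting of Nisnevich covers over the Henselian local base.
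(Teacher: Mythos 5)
Your proof is correct and is essentially the argument the paper has in mind: the paper simply remarks that the lemma is "obvious" from the preceding discussion on the dictionary between $\A^1$-homotopies in $\sS(\sF)$ and $\A^1$-ghost homotopies in $\sF$, together with the fact that Nisnevich covers of a Henselian local scheme split, and your writeup carries out exactly that unwinding in both directions.
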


The rest of this subsection is devoted to the proof of Theorem \ref{Intro-theorem field case}.  We introduce the concept of \emph{almost proper sheaves} (see Definition \ref{definition-almost-proper} below), which we shall find helpful while handling the iterations of the functor $\sS$, for the purposes of this paper.

\begin{notation}
\label{notation-agree-on-points}
Let $Z$ be a variety over $k$ and let $x$ be a point of $Z$. Let $k(x) = \sO_{Z,x}/\mathfrak{m}_x$ be the residue field at $x$. The quotient homomorphism $\sO_{X,x} \to k(x)$ induces a morphism $\Spec(k(x)) \to \Spec(\sO_{Z,x}) \to Z$ which will also be denoted by $x: \Spec(k(x)) \to Z$. 

Let $\sF$ be a sheaf of sets and let $Z$ be a variety over $k$.  Let $s_1, s_2 \in \sF(Z)$.  We will write  $s_1 \sim_0 s_2$ if for any point $x$ of $Z$, the morphisms $s_1 \circ x, s_2 \circ x: \Spec(k(x)) \to \sF$ are identical.  
\end{notation}

\begin{rmk}
In Notation \ref{notation-agree-on-points}, the extension $k(x)/k$ may have positive transcendence degree. For instance, this may happen when $U$ is a smooth, irreducible scheme of dimension $\geq 1$ over $k$ and $x$ is the generic point of $U$. Then $\Spec(k(x))$ is viewed as an essentially smooth scheme over $k$ and the set $\sF(\Spec(k(x))$ is the direct limit $$\sF(\Spec(k(x)) := \varinjlim_{U^{\prime}} \sF(U^{\prime})$$ where $U^{\prime}$ varies over all the open subschemes of $U$. Thus any morphism $\Spec(k(x)) \to \sF$ can be represented by a morphism $U^{\prime} \to \sF$ where $U^{\prime}$ is some open subscheme of $U$. In the following discussion, we will also have to consider morphisms of the form $\A^1_{k(x)} \to \sF$. Such a morphism can be represented by a morphism $\A^1_{U^{\prime}} \to \sF$ where $U^{\prime}$ is an open subscheme of $U$.   
\end{rmk}

\begin{defn}
\label{definition-almost-proper}
Let $\sF$ be a sheaf of sets over $Sm/k$. We say that $\sF$ is \emph{almost proper} if the following two properties hold:
\begin{itemize}
 \item[(AP1)] Let $U$ be a smooth, irreducible variety of dimension $\leq 2$ and let $s: U \to \sF$ be any morphism. Then there exists a smooth projective variety $\-U$, a birational map $i: U \dashrightarrow \-U$ and a morphism $\-s: \-U \to \sF$ which ``extends $s$ on points''. By this we mean that  if $U^{\prime}$ is the largest open subscheme of $U$ such that $i$ is represented by a morphism $i^{\prime}: U^{\prime} \to \-U$, then we have $\-s \circ i^{\prime} \sim_0 s|_{U^{\prime}}$.
 \item[(AP2)] Let $U$ be a smooth, irreducible curve over $k$ and let $U^{\prime}$ be an open subscheme. Suppose we have two morphisms $s_1, s_2: U \to \sF$ such that $s_1|_{U^{\prime}} = s_2|_{U^{\prime}}$. Then $s_1 \sim_0 s_2$. 
\end{itemize}
\end{defn}

It is clear that a proper scheme over $k$ represents an almost proper sheaf. Thus, in order to prove Theorem \ref{Intro-theorem field case} in the introduction, it will suffice to prove that if $\sF$ is an almost proper sheaf, then $\sS(\sF)(K) = \sS^n(\sF)(K)$ for any finitely generated field extension $K/k$. This will be done in Theorem \ref{theorem field case} below.  

\begin{lemma}
\label{lemma-connect-points}
Let $\sF$ be an almost proper sheaf of sets on $Sm/k$. Let $U$ be a smooth curve, let $x: \Spec(k) \to U$ be a $k$-rational point on $U$ and let $U^{\prime}$ be the open subscheme $U \backslash \{x\}$ of $U$. Let $f,g: U \to \sF$ be such that the morphisms $f|_{U^{\prime}}$ and $g|_{U^{\prime}}$ are $\A^1$-chain homotopic. Then the morphisms $f \circ x,  g \circ x: \Spec(k) \to \sF$ are $\A^1$-chain homotopic.  
\end{lemma}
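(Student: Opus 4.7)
The strategy is to use (AP1) to extend each step $h_i$ of the chain homotopy to a morphism on a smooth projective surface, and then to use (AP2) to push the resulting relations from $U' = U \setminus \{x\}$ down to the distinguished point $x$.

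Fix a smooth projective compactification $\-U$ of $U$ and set $\-X := \P^1 \times \-U$, a smooth projective surface containing $\A^1 \times U$ as an open subscheme. Apply (AP1) to each $h_i \in \sF(\A^1 \times U')$ (a morphism out of the smooth irreducible surface $\A^1 \times U'$) to obtain a smooth projective surface $\-W_i$, a birational map $\phi_i \colon \A^1 \times U' \dashrightarrow \-W_i$ and a morphism $\-s_i \colon \-W_i \to \sF$ with $\-s_i \circ \phi_i \sim_0 h_i$ on the locus where $\phi_i$ is a morphism. The map $\phi_i$ extends uniquely to a rational map $\~\phi_i \colon \-X \dashrightarrow \-W_i$; since $\-X$ is a smooth surface and $\-W_i$ is projective, the indeterminacy locus of $\~\phi_i$ is a finite set of closed points, and a sequence of blow-ups $\pi_i \colon Y_i \to \-X$ at closed points resolves it, producing a morphism $\alpha_i \colon Y_i \to \-W_i$. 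Set $H_i := \-s_i \circ \alpha_i \in \sF(Y_i)$.

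In each $Y_i$ let $\~L_0^i, \~L_1^i, \~C^i$ denote the strict transforms of the smooth curves $\{0\} \times \-U$, $\{1\} \times \-U$ and $\P^1 \times \{x\}$ respectively; all three are smooth projective curves, each mapped isomorphically by $\pi_i$ onto the corresponding original curve. Restricting $H_i$ to $\~L_0^i \cong \-U$ and further to the open $U \subset \-U$ defines a morphism $F_i^0 \colon U \to \sF$; similarly define $F_i^1 \colon U \to \sF$ from $\~L_1^i$, and $\~H_i \in \sF(\A^1)$ from $\~C^i \cong \P^1$ by restriction to an appropriate $\A^1 \subset \P^1$. The ``extends on points'' clause of (AP1) yields $F_1^0 \sim_0 f$ on a dense open of $U'$, so $F_1^0$ and $f$ induce the same morphism at the generic point of $U$; since two sections of $\sF$ agreeing at the generic point already coincide on some non-empty open subscheme, (AP2) then forces $F_1^0 \sim_0 f$ on all of $U$, and in particular $F_1^0(x) = f(x)$ in $\sF(\Spec k)$. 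The same argument, applied at the other end of $h_n$ and to the chain relations $\sigma_1^* h_i = \sigma_0^* h_{i+1}$, gives $F_n^1(x) = g(x)$ and $F_i^1(x) = F_{i+1}^0(x)$ for $1 \le i \le n-1$.

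It remains to link $F_i^0(x)$ with $\~H_i(0)$ and $F_i^1(x)$ with $\~H_i(1)$. If $\pi_i$ is an isomorphism at $(0,x)$, then $\~L_0^i \cap \~C^i$ is a single $k$-rational point and $F_i^0(x) = \~H_i(0)$ on the nose. Otherwise, $\pi_i^{-1}(0,x)$ is a connected tree of $\P^1$'s whose attachment points to $\~L_0^i$ and $\~C^i$ are $k$-rational (controlled by the $k$-rational tangent directions of $\{0\} \times \-U$ and $\P^1 \times \{x\}$ at the $k$-rational point $(0,x)$), and on each $\P^1$ along a connecting path in the tree the restriction of $H_i$, pulled back to a suitable $\A^1 \subset \P^1$, provides an $\A^1$-homotopy between the values of $H_i$ at consecutive vertices. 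Concatenating these homotopies with the $\~H_i$'s and with their analogues above $(1,x)$, and using the equalities of the previous paragraph, produces an $\A^1$-chain homotopy in $\sF(\A^1)$ from $f(x)$ to $g(x)$.

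\textbf{Main obstacle.} The main technical difficulty is the careful bookkeeping of the exceptional trees over $(0,x)$ and $(1,x)$ and the verification that every attachment point used remains $k$-rational, so that the extracted $\A^1$-homotopies genuinely join $k$-rational sections of $\sF$. Granting this, the argument is a systematic application of (AP1), (AP2), and the extension of rational maps from smooth curves to projective varieties.
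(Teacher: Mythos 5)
Your proof follows essentially the same route as the paper's: extend the homotopy data via (AP1) to a smooth projective surface, pass to a smooth projective surface dominating both it and $\P^1 \times \overline{U}$ by resolving indeterminacy, extract $\A^1$-chain homotopies from the exceptional locus over the $k$-rational points $(0,x)$ and $(1,x)$, and use (AP2) to pin down values at $x$. Two remarks.

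First, a structural difference. The paper isolates the case of a single $\A^1$-homotopy and then reduces the chain case to it by applying (AP1) to each of the sections $f_i \colon U' \to \sF$ (curves) to produce extensions $\tilde f_i \colon U \to \sF$ and chaining the resulting endpoint identities. You instead treat all steps $h_1,\dots,h_n$ of the chain at once, introducing the intermediate restrictions $F_i^0, F_i^1, \tilde H_i$ and tying consecutive ones together via the chain relations $\sigma_1^*h_i = \sigma_0^*h_{i+1}$ and (AP2). Both are workable; your version avoids the reduction but makes the bookkeeping heavier.

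Second, and more substantively: the step you flag as the \emph{main technical difficulty} is precisely the content that the paper outsources to \cite[Lemma 6.2.9]{Asok-Morel}. In the paper's formulation, after resolving indeterminacy one has $\tilde X$ with birational proper morphisms to both $X$ (from (AP1)) and $\overline{U} \times \P^1$, and lifts $\tilde t_0, \tilde t_1 \colon \overline{U} \to \tilde X$ of the two sections; the cited lemma gives directly that $\tilde t_0 \circ x$ and $\tilde t_1 \circ x$ are $\A^1$-chain homotopic in $\tilde X$. Your attempt to re-derive this by analyzing the exceptional tree over $(0,x)$ (and over $(1,x)$) is in spirit the proof of that lemma, but you do not close the $k$-rationality issue: the resolution $Y_i \to \overline{X}$ may require blow-ups at non-$k$-rational closed points, so the components of the exceptional fibre are not a priori $\P^1_k$'s, and the intersection points along the path need not obviously be $k$-rational. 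One must argue, for instance, that the unique path in the dual tree joining the two $k$-rational attachment points is Galois-stable, hence its components are Galois-fixed; then, propagating from a $k$-rational endpoint, each node and each component acquires a $k$-rational point and is therefore $\P^1_k$. Either carry out this argument explicitly or simply invoke \cite[Lemma 6.2.9]{Asok-Morel}, as the paper does. As written, your proof has a genuine gap at exactly this point, and you yourself acknowledge it.
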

\begin{proof}
First we prove the result in the case when $f|_{U^{\prime}}$ and $g|_{U^{\prime}}$ are simply $\A^1$-homotopic, i.e. there exists a morphism $h: U^{\prime} \times \A^1 \to \sF$ such that $h|_{U^{\prime} \times \{0\}} = f|_{U^{\prime}}$ and $h|_{U^{\prime} \times \{1\}} = g|_{U^{\prime}}$. 

By (AP1), there exists a smooth proper surface $X$ and a proper, birational morphism $i: U' \times \A^1 \dashrightarrow X$ and a morphism $\-h: X \to \sF$ ``extending $h$ on points". As in Definition \ref{definition-almost-proper}, we elaborate on the precise sense in which $\-h$ extends $h$. Suppose that $W$ is the largest open subscheme of $U^{\prime} \times \A^1$ on which $i$ is defined. Then $\-h \circ i|_W \sim_0 h|_W$. Note that $(U^{\prime} \times \A^1) \backslash W$ is a closed subscheme of codimension $2$. Thus there exists an open subscheme $U'' \subset U'$ such that $U'' \times \A^1 \subset W$.  The curves $U'' \times \{0\}$ and $U'' \times \{1\}$, which are contained in $W$,  are mapped into $X$ by $i$. The restrictions of $i$ to these curves induce morphisms $t_0, t_1: \-U \to X$, where $\-U$ is a smooth compactification of $U$. Since $\-h \circ i|_W \sim_0 h|_W$, we see that $\-h \circ t_0|_{U''} \sim_0 f|_{U''}$ and $\-h \circ t_1|_{U''} \sim_0 g|_{U''}$.  This implies that there exists a Zariski open subset $U'''$ of $U''$ such that $\-h \circ t_0|_{U'''} = f|_{U'''}$ and $\-h \circ t_1|_{U'''} = g|_{U''}$. Thus, by (AP2), $\-h \circ t_0 \circ x = f \circ x$ and $\-h \circ t_1 \circ x = g \circ x$. Thus in order to show that $f \circ x$ and $g \circ x$ are $\A^1$-chain homotopic, it suffices to show that $t_0 \circ x$ and $t_1 \circ x$ are $\A^1$-chain homotopic in $X$. Since $X$ is birational to $\-U \times \P^1$, by resolution of indeterminacy (see, for example, \cite[Chapter II, 7.17.3]{Hartshorne}), there exists a smooth projective surface $\~X$ and birational proper morphisms $\~X \to \-U \times \P^1$ and $\~X \to X$.  
\[
\xymatrix{
& \~X \ar[dl] \ar[dr] & \\
\-U \times \P^1 & & X \\
& W \ar@{_(->}[ul] \ar[ur]^i&
}
\]
\noindent Let $\-t_0: \-U \to \-U \times \P^1$ be the map that identifies $\-U$ with the curve $\-U \times \{(0:1)\}$ and $\-t_1: \-U \to \-U \times \P^1$ be the map that identifies $\-U$ with the curve $\-U \times \{(1:1)\}$.  It is easy to see that there exist unique morphisms $\~t_0, \~t_1: \-U \to \~X$ such that $\~t_0$ is a lift of both $t_0$ and $\-t_0$ and $\~t_1$ is a lift of both $t_1$ and $\-t_1$.  This completes the proof of the result when $f|_{U^{\prime}}$ and $g|_{U^{\prime}}$ are simply $\A^1$-homotopic by \cite[Lemma 6.2.9]{Asok-Morel}, since $\~t_0 \circ x$ and $\~t_1 \circ x$ are $\A^1$-chain homotopic.

Now, suppose $f|_{U^{\prime}}$ and $g|_{U^{\prime}}$ are $\A^1$-chain homotopic. Thus, we have a sequence $ f_0 = f|_{U^{\prime}}, \ldots, f_n =g|_{U^{\prime}}$ where $f_i$ is $\A^1$-homotopic to $f_{i+1}$ for every $i$. By (AP1), for each $i$, there exists a $\~f_i: U \to \sF$ such that $\~f_i|_{U^{\prime}} = f_i$. Thus $\~f_i \circ x$ and $\~f_{i+1} \circ x$ are $\A^1$-chain homotopic for every $i$. By (AP2), $\~f_0 \circ x = f \circ x$ and $\~f_n \circ x = g \circ x$.  
\end{proof}

\begin{lemma}
\label{lemma-almost-proper}
Let $\sF$ be an almost proper sheaf. Then $\sS(\sF)$ is also an almost proper sheaf.
\end{lemma}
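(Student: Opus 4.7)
The plan is to verify the two defining conditions (AP1) and (AP2) for $\sS(\sF)$ separately, proving (AP2) first by a direct application of Lemma \ref{lemma-connect-points} to lifts along Nisnevich covers, and then reusing that curve-based argument to prove (AP1).

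First I would prove (AP2). Let $U$ be a smooth irreducible curve, $U' \subset U$ an open subscheme, and $s_1, s_2 : U \to \sS(\sF)$ morphisms with $s_1|_{U'} = s_2|_{U'}$; only closed points $x \in U \setminus U'$ require attention. Using that $\sF \to \sS(\sF)$ is a Nisnevich epimorphism together with the Nisnevich splitting property at points with matching residue fields, I would produce an \'etale neighborhood $\pi : V \to U$ of $x$ with a unique preimage $y$ satisfying $k(y) = k(x)$ on which both pullbacks $s_i \circ \pi$ admit lifts $\tilde s_1, \tilde s_2 : V \to \sF$. The two lifts have the same image in $\sS(\sF)$ on $V \setminus \{y\}$, so a generic-point chain homotopy between them spreads out, after one further Nisnevich refinement and shrinking $V$, to an honest $\A^1$-chain homotopy on $V \setminus \{y\}$. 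Lemma \ref{lemma-connect-points}, applied after base change along the finite separable extension $k(y)/k$, then gives that $\tilde s_1 \circ y$ and $\tilde s_2 \circ y$ are $\A^1$-chain homotopic in $\sF(\Spec k(y))$, so their images in $\sS(\sF)(\Spec k(x))$ coincide.

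Next, for (AP1), I would take $s : U \to \sS(\sF)$ with $U$ smooth irreducible of dimension $\leq 2$, use the splitting of the Nisnevich cover at the generic point to obtain a dense open $U_0 \subseteq U$ and a lift $\tilde s : U_0 \to \sF$, and apply (AP1) for $\sF$ to $\tilde s$ to produce a smooth projective $\bar U$ birational to $U$, a birational map $i : U_0 \dashrightarrow \bar U$ with locus of definition $U_0^{\#} \subseteq U_0$, and a morphism $\bar{\tilde s} : \bar U \to \sF$ with $\bar{\tilde s} \circ i \sim_0 \tilde s|_{U_0^{\#}}$. Setting $\bar s = \bar{\tilde s}$ composed with $\sF \to \sS(\sF)$ and letting $U'$ be the largest open in $U$ on which the induced rational map $U \dashrightarrow \bar U$ is a morphism $i'$, the relation $\bar s \circ i' \sim_0 s$ is automatic on $U_0^{\#}$ by projection. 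For a point $x \in U' \setminus U_0^{\#}$ that is closed (the only case when $\dim U = 1$, and the codimension-$2$ case when $\dim U = 2$), I would pass a smooth curve $C \subset U$ through $x$ meeting $U_0^{\#}$ densely; since $\bar U$ is proper and $C$ is smooth, the rational map $C \dashrightarrow \bar U$ extends to a morphism $\beta : C \to \bar U$, and the Lemma \ref{lemma-connect-points}-based argument used for (AP2), now applied to $s|_C$ and $\bar s \circ \beta$, yields $s(x) = \bar s(\beta(x))$.

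The main obstacle is the remaining case in which $\dim U = 2$ and $x$ is a codimension-$1$ point whose closure $D$ is an irreducible divisor entirely contained in $U \setminus U_0^{\#}$, so that no curve through $x$ meets $U_0^{\#}$ in the naive sense. To handle this, I would localize at the DVR $R = \sO_{U,x}$: by properness of $\bar U$ the rational map $i$ extends uniquely to a morphism $\Spec R \to \bar U$, giving a canonical $\sF$-lift of $\bar s \circ i'|_{\Spec R}$ via $\bar{\tilde s}$, while passing to the strict henselization $R^{sh}$ trivializes any Nisnevich cover and produces a local lift of $s$ to $\sF$ on $\Spec R^{sh}$. Both lifts agree in $\sS(\sF)$ at the generic point of $\Spec R^{sh}$ (which lies in $U_0^{\#}$), and the plan is to propagate agreement to the closed point of $\Spec R^{sh}$ by spreading the lifts onto a common smooth surface neighborhood, choosing a smooth curve transverse to $D$ at a closed point where the previous curve argument already applies, and combining a family of such transverse-curve chain homotopies with Lemma \ref{lemma-connect-points} along the normalization $\tilde D \to D$ to conclude $\A^1$-chain homotopy on the generic fibre $\Spec k(x)$.
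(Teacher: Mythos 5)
Your proof follows the paper's strategy closely: verify (AP2) by lifting to $\sF$ along a Nisnevich cover and invoking Lemma~\ref{lemma-connect-points}, then verify (AP1) by lifting over a dense open, applying (AP1) for $\sF$, and reducing to curves through each point via the already-proved (AP2). Your treatment of (AP2) and the closed-point case of (AP1) is essentially the paper's proof.

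Where you diverge is the third paragraph, which singles out the case of a codimension-$1$ point $x$ of a smooth surface $U$ whose closure $D$ is an irreducible divisor lying entirely in the complement of the Zariski lifting locus $U_0^{\#}$ (i.e.\ $D \cap U_0^{\#} = \emptyset$). You are right that the paper's curve-through-$x$ argument does not apply verbatim here: any locally closed smooth curve containing the non-closed point $x$ is an open subset of $D$, so it cannot meet $U_0^{\#}$, and the paper's phrase ``there exists a smooth curve $D$ \dots\ such that $j(D)$ contains $x$ and also intersects $U''$'' silently excludes this configuration. Flagging this is a genuine observation. However, your proposed repair is not a proof. The steps ``propagate agreement to the closed point of $\Spec R^{sh}$ by spreading the lifts onto a common smooth surface neighborhood, choosing a smooth curve transverse to $D$'' and ``combining a family of such transverse-curve chain homotopies with Lemma~\ref{lemma-connect-points} along the normalization $\tilde D \to D$'' are not arguments one can check: it is unclear how agreement at closed points of $D$ (which is what the transverse curves give you, via the already-handled closed-point case) could be upgraded to equality of the two resulting elements of $\sS(\sF)(\Spec k(x))$, which is a statement about a dense open of (the normalization of) $D$, not about closed points. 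Nothing in (AP2) or Lemma~\ref{lemma-connect-points} provides a converse of this type, i.e.\ a way to pass from pointwise closed-point agreement to generic agreement. As written, this final case is left open in your proposal. A cleaner route would be to exploit the splitting of the Nisnevich cover at the codimension-$1$ point $x$ itself to obtain a lift of $s$ to $\sF$ over a neighborhood of $x$, and then compare this lift with $\bar{\tilde s}\circ i''$ directly via (AP2) on the curve $D$, rather than trying to assemble transverse-curve homotopies.
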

\begin{proof}
We first check condition (AP2). Thus, let $U$ be a smooth curve and let $U^{\prime} \subset U$ be an open subscheme. We assume that we have two morphisms $s_1, s_2: U \to \sS(\sF)$ such that $s_1|_{U^{\prime}} = s_2|_{U^{\prime}}$ and prove that $s_1 \sim_0 s_2$. Without any loss of generality, we may assume that $U \backslash U^{\prime}$ consists of a single closed point. By a change of base, if necessary, we may assume that this point is $k$-rational, that is, it is the image of a morphism $x: \Spec(k) \to U$. We need to prove that $s_1 \circ x = s_2 \circ x$. 

The morphism $\phi: \sF \to \sS(\sF)$ is an epimorphism of sheaves. Thus, there exists a smooth, irreducible curve $C$ and an \'etale morphism $p: C \to U$ such that $x$ can be lifted to a morphism $c_0: \Spec(k) \to C$, (i.e. $p \circ c_0 = x$) and such that the morphisms $p \circ s_1$ and $p \circ s_2$ can be lifted to $\sF$. In other words, there exist morphisms $f,g: C \to \sF$ such that $\phi \circ f =s_1 \circ p$ and $\phi \circ g = s_2 \circ p$. Since $s_1|_{U^{\prime}} = s_2|_{U^{\prime}}$, there exists a Nisnevich cover $q: V \to U^{\prime}$ such that $q \circ f$ and $q \circ g$ are $\A^1$-chain homotopic. Let $K$ denote the function field of $C$. Then the canonical morphism $\eta: \Spec(K) \to \sF$ lifts to $V$. Thus the morphisms $f \circ \eta$ and $g \circ \eta$ are $\A^1$-chain homotopic. Thus, there exists an open subscheme $C^{\prime} \subset C$ such that if $i: C^{\prime} \hookrightarrow C$ is the inclusion, the morphisms $f \circ i$ and $g \circ i$ are $\A^1$-chain homotopic. Applying Lemma \ref{lemma-connect-points}, we see that $f \circ c_0, g \circ c_0: \Spec(k) \to \sF$ are $\A^1$-chain homotopic. Thus $$s_1 \circ x =  s_1 \circ p \circ c_0  = \phi \circ f \circ c_0 = \phi \circ g \circ c_0 = s_2 \circ p \circ c_0 = s_2 \circ x$$ as desired. This shows that the sheaf $\sS(\sF)$ satisfies the condition (AP2).

Now we check the condition (AP1). First suppose that $U$ is a smooth, irreducible variety of dimension $\leq 2$ and we have a morphism $s: U \to \sS(\sF)$. Since $\phi: \sF \to \sS(\sF)$ is surjective, there exists a Nisnevich cover $p: V \to U$ of such that the morphism $s \circ p$ lifts to $\sF$. Thus there exists an open subscheme $U^{\prime} \subset U$ such that the morphism $s|_{U^{\prime}}$ lifts to a morphism $t: U^{\prime} \to \sF$. Applying the condition (AP1) for $\sF$, there exists a smooth projective variety $\-U$, a birational map $i: U^{\prime} \dashrightarrow \-U$ and a morphism $\-t: \-U \to \sF$ ``extending" $t$. Let $U''$ be the largest open subscheme of $U^{\prime}$ such that the rational map $i$ can be represented by a morphism $i^{\prime}: U'' \to \-U$. We know that $\-t \circ i^{\prime} \sim_0 t|_{U''}$. 

We claim that $\phi \circ \-t$ is the required morphism. We note that $i^{\prime}$ defines a rational map from $U$ to $\-U$. Let $U''' \subset U$ be the largest open subscheme of $U$ such that this rational map can be represented by a morphism $i'': U''' \to \-U$. So, we clearly have $U'' \subset U'''$. We wish to prove that for any point $x$ in $U'''$, the equality $s \circ x = \phi \circ \-s \circ i'' \circ x$ holds. 

When $x \in U^{\prime \prime}$, we already know that $\-s \circ i^{\prime} \circ x = t|_{U^{\prime \prime}} \circ x$. Composing by $\phi$, we get that $\phi \circ \-s \circ i^{\prime} \circ x = \phi \circ t|_{U^{\prime \prime}} \circ x$. But $\phi \circ t|_{U^{\prime \prime}} = s|_{U^{\prime \prime}}$. Thus we see that $\phi \circ t|_{U^{\prime \prime}} \circ x = s \circ x$. Since $i''|_{U''} = i^{\prime}|_{U''}$, we obtain the desired equality in this case. 

Thus we now assume that $x \in U''' \backslash U^{\prime \prime}$. Since $U^{\prime \prime}$ is a dense open subscheme of $U$, there exists a smooth curve $D$ over $k$ and a locally closed immersion $j: D \rightarrow U'''$ such that $j(D)$ contains $x$ and also intersects $U''$. (When $\dim(U) = 1$, $D$ will be an open subscheme of $U$.) Let $z$ be the generic point of $D$. Then we note that $j(z)$ is a point of $U''$ with residue field $k(z)$. By what we have proved in the previous paragraph, we know that $s \circ j(z) = \phi \circ \-s \circ i'' \circ j(z)$. Thus there exists an open subscheme $D^{\prime}$ of $D$ such that $j$ maps $D^{\prime}$ into $U''$ and $s \circ j|_{D^{\prime}} = \phi \circ \-s \circ i'' \circ j|_{D^{\prime}}$. Since we have already verified (AP2) for $\sS(\sF)$, we can conclude that $s \circ x = \phi \circ \-s \circ i'' \circ x$. This completes the proof of the fact that $\sS(\sF)$ is almost proper. 
\end{proof}

\begin{thm}
\label{theorem field case}
Let $\sF$ be an almost proper sheaf. Then for any field extension $K$ of $k$, $\sS(\sF)(K) = \sS^n(\sF)(K)$ for any integer $n \geq 1$. 
\end{thm}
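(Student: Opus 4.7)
My plan is to proceed by induction on $n$.  Combined with Lemma~\ref{lemma-almost-proper} (which ensures that $\sS^{i}(\sF)$ is almost proper for every $i \geq 1$), it will suffice to prove the case $n = 2$: namely, for any almost proper sheaf $\sG$ and any field extension $K/k$, the canonical surjection $\sS(\sG)(K) \to \sS^{2}(\sG)(K)$ is a bijection.  Applying this to $\sG = \sS^{i-1}(\sF)$ then yields $\sS^{i}(\sF)(K) = \sS^{i+1}(\sF)(K)$ for all $i \geq 1$, whence $\sS(\sF)(K) = \sS^{n}(\sF)(K)$.

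To handle the $n = 2$ case, I would first reduce it to a statement about ghost homotopies.  Since every Nisnevich cover of $\Spec K$ can be refined by $\{\mathrm{id}_{\Spec K}\}$, Nisnevich sheafification does not alter sections on $\Spec K$; in particular $\sS(\sG)(K) = \sG(K)/\!\sim_{ch}$, where $\sim_{ch}$ is $\A^{1}$-chain homotopy in $\sG$ over $K$, and any $t \in \sS(\sG)(K)$ lifts to some $s \in \sG(K)$.  An analysis exactly parallel to the proof of Lemma~\ref{lemma strategy} then shows that an $\A^{1}$-chain homotopy in $\sS(\sG)$ over $\Spec K$ between $t_{1}, t_{2}$ translates, after lifting, into a chain of $\A^{1}$-ghost homotopies in $\sG$ over $\Spec K$ connecting appropriate lifts $a, b \in \sG(K)$ of $t_{1}, t_{2}$.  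By the transitivity of chain homotopy, the injectivity of $\sS(\sG)(K) \to \sS^{2}(\sG)(K)$ thus reduces to the following geometric statement: whenever $a, b \in \sG(K)$ are connected by a \emph{single} $\A^{1}$-ghost homotopy of $\Spec K$ in $\sG$, they are $\A^{1}$-chain homotopic in $\sG$ over $K$.

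To prove this geometric statement, I would first reduce to a finitely generated $K$: since any sheaf of sets on $Sm/k$ commutes with the filtered colimit of the coordinate rings defining $K$, both sides of the statement pass to such colimits, so one may assume $K$ is finitely generated, and since $k$ is perfect, write $K = k(U)$ for some smooth variety $U$ over $k$.  I would then spread out the ghost homotopy $(V, W, h, h^{W})$ together with the Henselian lifts $\tilde\sigma_{0}, \tilde\sigma_{1}: \Spec K \to V$ of the $0$- and $1$-sections to data $(\mathcal V \to \A^{1}_{U'}, \mathcal W, \mathcal H, \mathcal H^{\mathcal W})$ over a Zariski open $U' \subset U$, together with spread-out lifts $\tilde s_{0}, \tilde s_{1}: U' \to \mathcal V$ and morphisms $\alpha := \mathcal H \circ \tilde s_{0}$ and $\beta := \mathcal H \circ \tilde s_{1}$ from $U'$ to $\sG$ which restrict to $a$ and $b$ at the generic point.

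The remaining problem would be to produce an $\A^{1}$-chain homotopy between $\alpha$ and $\beta$ after restriction to some dense open of $U'$; passing to the generic point will then deliver the desired chain homotopy over $K$.  I would attempt this by choosing Zariski-local sections of $\mathcal V \to \A^{1}_{U'}$ covering the $0$- and $1$-sections, composing with $\mathcal H$ to manufacture $\A^{1}$-homotopies on suitable dense opens, and then bridging adjacent pieces using the chain homotopies on $\mathcal V \times_{\A^{1}_{U'}} \mathcal V$ provided by $\mathcal H^{\mathcal W}$.  The hardest part will be the dimensional issue: the spread-out scheme $\mathcal V$ has dimension $\dim U + 1$ over $k$, which can easily exceed $2$ for high-transcendence-degree $K$, so (AP1) cannot be invoked on $\mathcal V$ itself; to get around this I would restrict $\mathcal V$ along suitably chosen curves in $U'$ and apply Lemma~\ref{lemma-connect-points} repeatedly, descending the required chain homotopy down to the generic point of $U$ rather than attempting to construct it over $U'$ all at once.
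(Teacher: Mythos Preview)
Your reduction to the case $n=2$ via Lemma~\ref{lemma-almost-proper} matches the paper's. From that point on, however, the paper takes a much shorter route that avoids both the ghost-homotopy formalism and your spreading-out manoeuvre entirely.

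The paper first asserts that by a base change one may assume $K = k$; this makes $\A^1_K$ an honest curve over the base field, so (AP1) applies directly and the dimension obstacle you flag simply disappears. Then, rather than unwinding an $\A^1$-homotopy $h:\A^1_k \to \sS(\sF)$ into a ghost homotopy in $\sF$, the paper lifts $h$ along the epimorphism $\phi:\sF \to \sS(\sF)$ to a morphism $h': U \to \sF$ on a dense open $U \subset \A^1_k$, applies (AP1) to the curve $U$ to produce $\bar h: \P^1_k \to \sF$ extending $h'$ on points, and invokes (AP2) for the almost proper sheaf $\sS(\sF)$ to conclude that $\phi \circ \bar h|_{\A^1_k} \sim_0 h$. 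Since $\bar h(0)$ and $\bar h(1)$ are visibly $\A^1$-homotopic in $\sF$ via $\bar h|_{\A^1_k}$, their images $x$ and $y$ in $\sS(\sF)(k)$ coincide. No Nisnevich covers of $\A^1$, no $h^W$, no spreading out.

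Your final paragraph is where the proposal stops being a proof. Restricting the spread-out data along a curve $C \subset U'$ yields a ghost homotopy over $C$ and hence, via (AP1) in dimension $\leq 2$ and Lemma~\ref{lemma-connect-points}, a chain homotopy between $\alpha|_C$ and $\beta|_C$. But chain homotopies over closed curves in $U'$ do not assemble into a chain homotopy over the generic point of $U'$: you are producing conclusions at \emph{special} points of $U'$, while what you need is a conclusion at the \emph{generic} point, and you give no mechanism for passing from one to the other. Lemma~\ref{lemma-connect-points} itself goes in the wrong direction for this purpose (it deduces chain homotopy at a closed point from chain homotopy on a punctured neighbourhood). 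The paper's base-change reduction sidesteps the issue by collapsing $U'$ to a single point, after which a one-line application of (AP1) on a curve suffices.
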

\begin{proof}
By a base change, we may assume that $K = k$. Thus we need to prove that if $\sF$ is almost proper, we have $\sS(\sF)(k) = \sS^{n}(\sF)(k)$ for all $n \geq 1$. In view of Lemma \ref{lemma-almost-proper}, it suffices to show that $\sS(\sF)(k) = \sS^2(\sF)(k)$. 

Let $\phi$ denote the morphism of sheaves $\sF \to \sS(\sF)$. Let $x$ and $y$ be elements of $\sS(\sF)(k)$ and suppose there exists a morphism $h: \A^1_k \to \sS(\sF)$ such that $h(0) = x$ and $h(1) = y$. As $\phi$ is an epimorphism of Nisnevich sheaves, there exists an open subscheme $U$ of $\A^1_k$ such that $h|_U$ can be lifted to $\sF$, i.e. there exists a morphism $h^{\prime}: U \to \sF$ such that $\phi \circ h^{\prime} = h|_{U}$.  By (AP1), there exists a morphism $\-h: \P^1_k \to \sF$  which ``extends $h^{\prime}$ on points'', i.e. if $i: U \rightarrow \P^1_k$ is the composition $U  \hookrightarrow \A^1_k  \hookrightarrow \P^1_k$, for all points $x$ of $U$, we have $\-h \circ i \circ x = h^{\prime} \circ x$. (Here we identify $\A^1_k$ with the open subscheme of $\P^1_k = Proj(k[T_0,T_1])$ given by $T_1 \neq 0$.) Applying this to the generic point of $U$, we see that there exists an open subscheme $U^{\prime}$ of $U$ such that $\-h \circ i|_{U^{\prime}} = h^{\prime}|_{U^{\prime}}$. Since $i$ is just the inclusion, we may write this as $\-h|_{U'} = h'
|_{U'}$. 

Consider the morphism $\~h:=\phi \circ \-h|_{\A^1_k}: \A^1_k \to \sS(\sF)$. We have the equalities $\~h|_{U^{\prime}} = \phi \circ h^{\prime}|_{U^{\prime}} = h|_{U^{\prime}}$. Thus, by (AP2), we see that $\~h \sim_0 h$. But this means that $x = (\phi \circ \-h)((0:1))$ and $y = (\phi \circ \-h)((1:1))$. In other words, $x$ and $y$ are images under $\phi$ of two $\A^1$-homotopic morphisms from $\Spec(k)$ into $\sF$. But by the definition of the functor $\sS$, this implies that $x = y$.   
\end{proof}

\begin{cor} \label{corollary field case}
If $\sF$ is an almost proper sheaf of sets, the canonical surjection $\sS(F)(K) \to \pi_0^{\A^1}(K)$ is a bijection, for any finitely generated field extension $K/k$.  In particular, this holds for any proper scheme $X$ of finite type over $k$.
\end{cor}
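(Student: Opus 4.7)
The plan is to combine Theorem \ref{theorem field case} with Theorem \ref{theorem lim S^n} and Remark \ref{remark factorization through pi_0-a1} to exhibit a retraction of the canonical surjection $\sS(\sF)(\Spec K) \twoheadrightarrow \pi_0^{\A^1}(\sF)(\Spec K)$, forcing it to be injective as well.

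First I would verify that the natural map $\sS(\sF) \to \pi_0^{\A^1}(\sF)$ is a sheaf epimorphism. It exists because $\A^1$-homotopic sections of $\sF$ become equal in $\Hom_{\mathcal{H}(k)}(U,\sF)$, so the map $\sF \to \pi_0^{\A^1}(\sF)$ factors uniquely through the quotient sheaf $\sS(\sF)$. The composite $\sF \to \sS(\sF) \to \pi_0^{\A^1}(\sF)$ is a sheaf epimorphism by the unstable $\A^1$-connectivity theorem \cite[\S 2, Corollary 3.22]{Morel-Voevodsky}, so the second arrow is also an epimorphism. Evaluating at $\Spec K$ preserves surjectivity: since $K$ is finitely generated over $k$, the scheme $\Spec K$ is essentially smooth, sections at $\Spec K$ are a filtered colimit of sections over smooth open subschemes of a model of $K$, and this colimit computes a Nisnevich stalk on which sheaf epimorphisms are surjective. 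Hence $\sS(\sF)(\Spec K) \twoheadrightarrow \pi_0^{\A^1}(\sF)(\Spec K)$ is a surjection of sets.

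Next, by Theorem \ref{theorem field case} each transition map $\sS^n(\sF)(\Spec K) \to \sS^{n+1}(\sF)(\Spec K)$ is a bijection, so passage to the colimit (which commutes with evaluation at the essentially smooth $\Spec K$) yields $\sL(\sF)(\Spec K) = \sS(\sF)(\Spec K)$. By Remark \ref{remark factorization through pi_0-a1}, the canonical morphism $\sS(\sF) \to \sL(\sF)$ factors through $\pi_0^{\A^1}(\sF)$. Evaluating this factorization at $\Spec K$ gives
$$\sS(\sF)(\Spec K) \twoheadrightarrow \pi_0^{\A^1}(\sF)(\Spec K) \to \sL(\sF)(\Spec K) = \sS(\sF)(\Spec K),$$
whose composition is the canonical identification. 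The first arrow is then forced to be injective, hence bijective, proving the first assertion.

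For the second assertion, I need only note that any proper scheme $X$ of finite type over $k$ represents an almost proper sheaf. Condition (AP1) follows from properness together with resolution of indeterminacy: given a morphism from a smooth variety $U$ of dimension $\leq 2$ to $X$, one takes a smooth compactification of $U$ and resolves the resulting rational map to obtain a morphism from a smooth projective variety into the proper $X$, agreeing with the original on a common open subscheme and hence on points there. Condition (AP2) is exactly the statement that two morphisms from a smooth curve to the separated scheme $X$ agreeing on a dense open subscheme must agree on the entire curve, which is standard. The genuine difficulty is absorbed into Theorem \ref{theorem field case} via the almost proper formalism, so no further obstacle arises; the corollary is a formal consequence.
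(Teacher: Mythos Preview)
Your proof is correct and follows essentially the same route as the paper: combine Theorem \ref{theorem field case} with the definition of $\sL(\sF)$ to identify $\sS(\sF)(K)$ with $\sL(\sF)(K)$, then invoke Remark \ref{remark factorization through pi_0-a1} to sandwich $\pi_0^{\A^1}(\sF)(K)$ between two copies of $\sS(\sF)(K)$ and conclude. You have simply fleshed out details the paper leaves implicit, namely the surjectivity on $K$-points, the commutation of the colimit with evaluation at $\Spec K$, and the verification that proper schemes are almost proper.
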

\begin{proof}
Theorem \ref{theorem field case} and Theorem \ref{theorem lim S^n} together imply that 
$$\sS(\sF)(K) \to \sL(\sF)(K)$$
is a bijection for all finitely generated field extensions $K$ of $k$.  The result now follows by Remark \ref{remark factorization through pi_0-a1}.
\end{proof}

This completes the proof of Theorem \ref{Intro-theorem field case} stated in the introduction.

\subsection{The conjectures of Morel and of Asok-Morel for non-uniruled surfaces}
\label{subsection non-uniruled}

In this subsection, we show that Conjectures \ref{a1invariance} and \ref{conjecture Asok-Morel} hold for a non-uniruled surface over a field $k$.  In the case of non-uniruled surfaces, it turns out that the condition in Lemma \ref{lemma strategy} is true even when $\dim(U) >0$. The key observation in this case is that the problem can be reduced to $1$-dimensional schemes.  We begin by recalling the definition of a $\P^1$-bundle from \cite[Chapter II, Definition 2.5]{Kollar} which will used in the proof.

\begin{defn}
\label{definition P1-bundle}
Let $X$ be a scheme over $k$.  We say that $f:Y \to X$ is a $\P^1$-bundle if $f$ is a smooth, proper morphism such that for every point $x \in X$, the fibre $f^{-1}(x)$ is a rational curve.
\end{defn}

The following property of $\P^1$-bundles seems to be well-known to experts and is mentioned without proof in \cite[Chapter II, Definition 2.5]{Kollar}.  We include a proof here for the sake of completeness.

\begin{lemma}
\label{lemma P1-bundles}
Let $\pi: E \to B$ be a smooth, projective morphism of varieties over $k$ such that for any point $b \in B$, the fibre $E_b$ is isomorphic to $\P^1_b$. Then $\pi$ is an \'etale-locally trivial fibre bundle.  
\end{lemma}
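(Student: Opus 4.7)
The plan is to reduce, \'etale-locally on $B$, to the case in which $\pi$ admits a global section, and then to realise $E$ as the projectivisation of a rank $2$ pushforward vector bundle on $B$.

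First I would produce an \'etale-local section of $\pi$. For any point $b \in B$ with residue field $\kappa(b)$, the hypothesis that $E_b \cong \P^1_{\kappa(b)}$ guarantees that the fibre has $\kappa(b)$-rational points, and the smoothness of $\pi$ at any such point produces, in the standard way, an \'etale neighbourhood $U \to B$ of $b$ over which $\pi$ acquires a section. Hence it is enough to treat the case of an honest section $s : B \to E$. Since $\pi$ is smooth of relative dimension $1$ and $s$ is a section, $s(B)$ is an effective Cartier divisor on $E$, and the line bundle $\sL := \sO_E(s(B))$ restricts on each fibre $E_b \cong \P^1$ to $\sO_{\P^1}(1)$. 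Grothendieck's cohomology and base change, applied to the proper flat morphism $\pi$ using $h^0(\P^1, \sO(1)) = 2$ and $h^1(\P^1, \sO(1)) = 0$, then shows that $\pi_* \sL$ is a locally free $\sO_B$-module of rank $2$ whose formation commutes with arbitrary base change, and that the adjunction map $\pi^* \pi_* \sL \to \sL$ is surjective (being fibrewise surjective by the global generation of $\sO_{\P^1}(1)$). This surjection induces a $B$-morphism $f : E \to \P(\pi_* \sL)$.

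On each fibre $E_b$, the map $f$ is the canonical identification $\P^1 \cong \P(H^0(\P^1,\sO(1)))$, so $f$ is a fibrewise isomorphism between smooth proper flat $B$-schemes of relative dimension $1$. The usual fibrewise criterion for isomorphisms between proper flat schemes of finite presentation over $B$ then upgrades $f$ to a global isomorphism $E \cong \P(\pi_* \sL)$. Finally, any rank $2$ vector bundle on $B$ is Zariski-locally trivial, so $\P(\pi_* \sL)$, and therefore $E$ after the initial \'etale base change, is Zariski-locally isomorphic to $\P^1_B$. This establishes the desired \'etale-local triviality.

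I do not expect a substantial obstacle: once the \'etale-local section is in place, the rest is a routine instance of Grothendieck's projective bundle construction. The one point at which the precise hypothesis is used in an essential way is the production of the \'etale-local section, where it matters that the fibres are literally $\P^1_{\kappa(b)}$ and not merely forms of $\P^1$ (such as non-split Brauer--Severi varieties), since otherwise the fibres would lack the rational points needed to start the section off.
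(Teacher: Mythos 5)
Your proposal is correct and follows essentially the same route as the paper: reduce to the case of a global section by passing to an \'etale cover (using that a smooth surjective morphism has \'etale-local sections), and then realise $E$ as $\P(\sE)$ for a rank~$2$ locally free sheaf $\sE = \pi_*\sO_E(s(B))$. The only difference is that the paper simply cites Hartshorne \cite[Ch.~V, Lemma~2.1, Prop.~2.2]{Hartshorne} for the section case and notes that the proof there works beyond the smooth-curve base, whereas you spell out that argument (cohomology and base change, fibrewise surjectivity of $\pi^*\pi_*\sL \to \sL$, and the fibrewise criterion for isomorphisms).
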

\begin{proof}
First we consider the case that $\pi$ admits a section $\tau: B \to E$. In this case, $E$ is isomorphic to $\P(\sE)$ where $\sE$ is a rank $2$ vector bundle over $B$. For the proof of this statement, we refer to \cite[Chapter V, Lemma 2.1, Proposition 2.2]{Hartshorne}. This result is proved there under the assumption that $B$ is a smooth curve, but the proof applies in general. 

For the general case, we use the fact that the smooth, surjective morphism $\pi$ has an \'etale-local section, i.e. there exists an \'etale cover $V \to B$ and a $B$-morphism $V \to E$. Then $E_V:= E \times_B V \to V$ is also a smooth, projective morphism with fibres isomorphic to $\P^1$. However, the $B$-morphism $E \to V$ induces a section $V \to E_V$. Thus, by the special case given above, $E_V \to V$ is a Zariski-locally trivial $\P^1$-fibre bundle. It follows that $\pi$ is an \'etale-locally trivial $\P^1$-fibre bundle over $B$. 
\end{proof}

\begin{prop}
\label{proposition A1-invariance for curves}
Let $X$ be a reduced, separated $1$-dimensional proper scheme (possibly singular) over $k$. Then $\sS(X) = \sS^2(X)$. 
\end{prop}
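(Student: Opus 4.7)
The plan is to apply Lemma \ref{lemma strategy}: it suffices to show that for any smooth Henselian local scheme $U$, any two $\A^1$-ghost homotopic elements $t_1, t_2 \in X(U)$ are $\A^1$-chain homotopic. I fix such $t_1, t_2$ together with ghost homotopy data $(V \to \A^1_U, W \to V \times_{\A^1_U} V, h, h^W)$ and lifts $\~\sigma_0, \~\sigma_1 : U \to V$ satisfying $t_i = h \circ \~\sigma_i$, and aim to construct an honest $\A^1$-chain homotopy between $t_1$ and $t_2$ in $X$.

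The first step is to use the Henselian structure of $U$. Because $V \to \A^1_U$ is Nisnevich and $\~\sigma_0, \~\sigma_1$ are prescribed sections over $\sigma_0(U), \sigma_1(U)$, Hensel's lemma extends each $\~\sigma_i$ to a section of $V \to \A^1_U$ over a Zariski neighborhood of $\sigma_i(U)$ in $\A^1_U$. After shrinking $V$ to the union of two such neighborhoods (and shrinking $W$ correspondingly), I reduce to the situation where $V$ consists essentially of two Zariski-open pieces of $\A^1_U$ containing the $0$- and $1$-sections and the ghost homotopy is given by honest morphisms from these opens into $X$ patched by the chain homotopy data $h^W$ on the overlaps.

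The core step exploits $\dim X = 1$. Any connected smooth scheme mapping to $X$ has image either a single point or a one-dimensional subscheme contained in a single irreducible component $X_j$ of $X$; in the latter case the map factors through the normalization $\~X_j$ on the preimage of the smooth locus, and $\~X_j$ is a smooth projective curve. If $\~X_j \cong \P^1$, the inclusion $\A^1 \subset \P^1$ allows the relevant piece of $h$ to extend across $\P^1_U$ by properness, and restricting along $\~\sigma_0, \~\sigma_1$ yields an honest $\A^1$-homotopy between the two sections in $X$. If $\~X_j$ has positive genus, any $\A^1$-homotopy appearing in $h^W$ that maps into $X_j$ must be constant on its $\A^1$-fibers, so the two ends of such a homotopy agree on the generic point; property (AP2) of the almost proper sheaf $X$ (Definition \ref{definition-almost-proper}) together with Lemma \ref{lemma-connect-points} then force them to coincide on all points. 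Concatenating the $\A^1$-homotopies thereby produced along the combinatorial structure of the overlaps $W$ yields the desired $\A^1$-chain homotopy.

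The main obstacle will be the gluing step: $h^W$ is an arbitrary-length chain of $\A^1$-homotopies between $h \circ pr_1$ and $h \circ pr_2$ on $W$, not an equality, so transferring this information from $W$ down to $U$ via the sections $\~\sigma_0, \~\sigma_1$ requires an induction on the length of $h^W$. At each inductive stage I expect to invoke almost properness of $X$ (immediate because $X$ is proper, reduced and one-dimensional) to extend intermediate $\A^1$-homotopies from open curves to their smooth compactifications, so that Lemma \ref{lemma-connect-points} applies and conversion of open-dense agreement into agreement on points becomes legitimate.
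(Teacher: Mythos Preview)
Your approach diverges substantially from the paper's, and it has real gaps. The paper does \emph{not} attempt to convert an arbitrary ghost homotopy into a chain homotopy for curves; that style of argument is reserved for the surface case (Theorem~\ref{theorem non-uniruled}). Instead the paper computes $\sS(X)(U)$ and $\sS^2(X)(U)$ directly. After reducing to the normalization, each component $C_i$ is a smooth projective curve. If $C_i$ is not geometrically rational it is $\A^1$-rigid, so $\sS(C_i)=C_i$ and the claim is trivial. If $C_i$ is geometrically rational it is a conic; then either $(C_i)_U \to U$ has no section (both sides empty) or it does, whence Lemma~\ref{lemma P1-bundles} plus the Henselian hypothesis make $(C_i)_U$ a \emph{trivial} $\P^1$-bundle over $U$, so $\sS(C_i)(U)=\sS^2(C_i)(U)=\ast$. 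No ghost-homotopy bookkeeping is needed.

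Your argument, by contrast, rests on steps that are not justified. The claim that ``Hensel's lemma extends each $\~\sigma_i$ to a section of $V \to \A^1_U$ over a Zariski neighborhood of $\sigma_i(U)$'' is not correct as stated: $\A^1_U$ is not Henselian local even when $U$ is, and an \'etale cover of $\A^1_U$ need not split on any Zariski open containing $\sigma_i(U)$. Your invocation of Lemma~\ref{lemma-connect-points} is also misplaced: that lemma applies when the base is a smooth curve with a single $k$-point removed, not to an arbitrary Henselian local $U$, and it concludes chain-homotopy of $k$-points, not of $U$-points. Finally, the ``extend across $\P^1_U$ by properness'' step is vague: $h$ is defined on $V$, not on an open of $\A^1_U$, so there is nothing to extend until the previous (unjustified) reduction is carried out. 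The honest fix is to abandon the ghost-homotopy manipulation here and argue as the paper does, reserving that machinery for the genuinely harder surface case.
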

\begin{proof}
Let $U$ be a smooth Henselian local scheme over $k$. We will show that $\sS(X)(U) = \sS^2(X)(U)$. 

If $X$ is smooth, it is a disjoint union of irreducible, smooth, projective curves $\{C_i\}_{i\in I}$ where $I$ is a finite set. We need to verify that $\sS(C_i)(U) = \sS^2(C_i)(U)$ for every $i \in I$. If $C_i$ is not geometrically rational (i.e. rational over the algebraic closure $\-k$ of $k$), it is $\A^1$-rigid (see Definition \ref{definition A1 rigid}) and thus $\sS(C_i) = C_i$. So, in this case, $\sS(C_i)(U) = \sS^2(C_i)(U)$ as desired. If $C_i$ is geometrically rational, it is isomorphic to a smooth plane conic. Any morphism $f: U \to C_i$ factors as $$U \overset{\Gamma_f}{\to} (C_i)_U:= C_i \times_{\Spec(k)} U \to C_i$$ where $\Gamma_f$ is the graph of $f$. If the morphism $(C_i)_U \to U$ has no section, then clearly $$\sS(C_i)(U) = \sS^2(C_i)(U) = \emptyset$$ which verifies our claim in this case. On the other hand, if there exists a section for the morphism $(C_i)_U \to U$, then $(C_i)_U \to U$ is a smooth, projective morphism such that the fibre over any point $u$ of $U$ is isomorphic to $\P^1_u$. (Recall that a conic is isomorphic to $\P^1$ if and only if it admits a rational point over the base field.) By Lemma \ref{lemma P1-bundles}, the morphism $(C_i)_U \to U$ is an \'etale-locally trivial $\P^1$-bundle. As $U$ is a Henselian local scheme, it is a trivial $\P^1$-fibre bundle. Thus, in this case 
\[
\sS(C_i)(U) = \sS^2(C_i)(U) = \ast
\]
\noindent as desired. Thus we have proved the lemma in case $X$ is smooth.

Suppose now that $X$ is not smooth.  For any smooth irreducible scheme $U$ over $k$, any $\A^1$-homotopy or $\A^1$-ghost homotopy of $U$ in $X$ must factor through the normalization of $X$ if its image is dense and the result follows from the smooth case.  If the image of an $\A^1$-homotopy or $\A^1$-ghost homotopy of $U$ is not dense in $X$, then it is a single point and the result follows immediately.
\end{proof}

\begin{thm}
\label{theorem non-uniruled}
Let $X$ be a proper, non-uniruled surface over $k$. Then we have $\sS(X) = \sS^2(X)$. 
\end{thm}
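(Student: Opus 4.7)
The plan is to invoke Lemma~\ref{lemma strategy} and show that, for any smooth Henselian local scheme $U$ over $k$, every pair of $\A^1$-ghost homotopic sections $t_1, t_2 \in X(U)$ is already $\A^1$-chain homotopic. The strategy is to cut the data of the ghost homotopy down to a suitable reduced one-dimensional closed subscheme of $X$ and then appeal to the curve case established in Proposition~\ref{proposition A1-invariance for curves}.

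The geometric input that brings in non-uniruledness is the following claim: for any smooth irreducible variety $Y$ over $k$ and any morphism $f : Y \times \A^1 \to X$, either $f$ factors through the projection $pr_Y : Y \times \A^1 \to Y$, or else the image of $f$ is contained in a one-dimensional closed subscheme of $X$. To prove this, I would resolve the indeterminacy of the induced rational map $Y \times \P^1 \dashrightarrow X$ (whose indeterminacy locus sits inside $Y \times \{\infty\}$) to obtain a smooth projective model $Z$ equipped with a morphism $Z \to X$ and a morphism $Z \to Y$ whose general fibre is $\P^1$. If $f$ were dominant, non-uniruledness of $X$ rules out a moving family of non-constant rational curves covering an open subset of $X$: the general $\P^1$-fibre of $Z \to Y$ must therefore either map constantly to $X$ (in which case $f$ descends through $pr_Y$) or land in a single fixed rational curve of $X$ (in which case the image of $f$ lies in that curve). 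This dichotomy is the main obstacle; once it is in place, the rest of the argument is essentially formal.

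Now take a ghost homotopy $\sH = (V \to \A^1_U,\; p : W \to V \times_{\A^1_U} V,\; h,\; h^W = (h^W_1, \ldots, h^W_n))$ connecting $t_1$ and $t_2$, and apply the claim to each $h^W_i : W \times \A^1 \to X$. If every $h^W_i$ is constant along its $\A^1$-direction, say $h^W_i = \phi_i \circ pr_W$, then the chain relations $\sigma_1^* h^W_i = \sigma_0^* h^W_{i+1}$ together with $\sigma_0^* h^W_1 = h \circ pr_1 \circ p$ and $\sigma_1^* h^W_n = h \circ pr_2 \circ p$ force $h \circ pr_1 \circ p = h \circ pr_2 \circ p$; surjectivity of $p$ and Nisnevich descent along $V \to \A^1_U$ then exhibit $h$ as the pullback of a morphism $\widetilde h : \A^1_U \to X$, which is an honest $\A^1$-homotopy between $t_1$ and $t_2$. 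Otherwise, some $h^W_{i_0}$ has image contained in a curve $C_{i_0} \subset X$; the chain relations then force each trivial link's constant value $\phi_j$ to coincide with a boundary value of an adjacent curve-supported link, so every $h^W_j$ factors through the reduced one-dimensional closed subscheme $D := \bigl(\bigcup_i C_i\bigr)_{\red} \subset X$, where the union runs over the non-trivial indices. Surjectivity of $pr_1 \circ p$ then forces $h$ (and hence $t_1, t_2$) through $D$ as well, so $\sH$ is in fact a ghost homotopy of $U$ in $D$. Since $D$ is reduced, separated, proper, and one-dimensional, Proposition~\ref{proposition A1-invariance for curves} gives $\sS(D) = \sS^2(D)$, and Lemma~\ref{lemma strategy} applied to $D$ upgrades this ghost homotopy to an $\A^1$-chain homotopy between $t_1$ and $t_2$ in $D$, and hence in $X$.
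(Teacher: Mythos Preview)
Your overall strategy matches the paper's: use non-uniruledness to show that every $\A^1$-homotopy into $X$ is either constant along $\A^1$ or lands in a curve, then reduce the ghost homotopy to one in a reduced $1$-dimensional closed subscheme and invoke Proposition~\ref{proposition A1-invariance for curves}. However, there is a genuine gap in your execution of Case~2.

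Your geometric dichotomy is stated (correctly) only for \emph{irreducible} $Y$, yet you apply it directly to $h^W_i : W \times \A^1 \to X$. The scheme $W$ is a Nisnevich cover of $V \times_{\A^1_U} V$ and is essentially never irreducible. For reducible $W$ the dichotomy fails globally: on one component $W_\alpha$ the homotopy $h^W_i$ may be constant along $\A^1$ with constant value $\phi_i^\alpha : W_\alpha \to X$ dominant onto $X$ (this can happen as soon as $\dim U \geq 1$, since then $\dim W_\alpha = \dim U + 1 \geq 2$), while on another component $W_\beta$ it is curve-supported. Then $h^W_i$ neither factors through $pr_W$ nor has image in a curve, and your chain-propagation argument along the indices $1,\ldots,n$ breaks down. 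Concretely, you cannot conclude from ``some $h^W_{i_0}$ is curve-supported'' that the adjacent constant link $h^W_{i_0\pm 1}$ has its (possibly dominant) constant value $\phi_{i_0\pm 1}$ landing in a curve.

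The fix is precisely the componentwise bookkeeping the paper carries out. One decomposes $V = \coprod_i V_i$ and $W = \coprod_{i,j,l} W^{ij}_l$ into irreducible pieces, applies the dichotomy to each $h^W|_{W^{ij}_l}$, and then uses the irreducibility of $\A^1_U$ to run a connectedness argument among the $V_i$: once a single $h|_{V_i}$ is forced into a curve (because some $W^{ij}_l$ carries a non-constant piece of $h^W$), one propagates this to every other $V_{i'}$ via the non-empty schemes $V_i \times_{\A^1_U} V_{i'}$, distinguishing again between constant and non-constant pieces of $h^W$ on the relevant $W^{ii'}_m$. Only after this does one obtain a single reduced $1$-dimensional closed $D \subset X$ through which $h$, all of $h^W$, and hence $t_1,t_2$, factor. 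Your outline omits this propagation step, and without it the reduction to $D$ is not justified.

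A minor point: your sketch of the geometric claim is slightly garbled. If $f : Y \times \A^1 \to X$ is dominant with $Y$ irreducible and $X$ non-uniruled, the conclusion (from the definition of uniruled, cf.\ Koll\'ar IV.1.3) is simply that \emph{every} fibre $\A^1_y \to X$ is constant; the alternative ``land in a single fixed rational curve'' does not arise in the dominant case. The non-dominant case gives image of dimension $\leq 1$ directly.
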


\begin{proof}
Since $X$ is not uniruled, we know (see \cite{Kollar}, Chapter IV, 1.3) that for any variety $Z$ over $k$, if we have a rational map $\P^1 \times Z \dashrightarrow X$, then either
\begin{itemize}
\item[(1)] this rational map $\P^1 \times Z \dashrightarrow X$ is not dominant, or 
\item[(2)] for every $z \in Z$, the induced map $\P^1_z \dashrightarrow X$ is constant.
\end{itemize}
It follows that if, for an essentially smooth scheme $U$ over $k$, we have an $\A^1$-homotopy $U \times \A^1 \to X$ , then it is either the constant homotopy or it factors through a $1$-dimensional, reduced, closed subscheme of $X$. We will make use of this observation in the following discussion. 

Let $U$ be a smooth, Henselian, local scheme over $k$. We will show that $\sS(X)(U) \to \sS(X)(U \times \A^1)$ is a bijection. If $\Dim(U) = 0$, this has already been proved in Theorem \ref{theorem field case}. Thus we now assume that $\Dim(U) \geq 1$. 

We will use the notation of Definition \ref{definition ghost homotopy} in the following arguments. So, let
\[
\sH:=\left(V \to \A^1_U, W \to V \times_{\A^1_U} V, h, h^W \right)
\]
be an $\A^1$-ghost homotopy of $U$ in $X$ connecting morphisms $t_1,t_2: U \to X$. As discussed before $\sH$ determines an $\A^1$-homotopy $\~h: \A^1_U \to \sS(X)$. We will show that either $\~h$ lifts to an $\A^1$-homotopy of $U$ in $X$ or that the given $\A^1$-ghost homotopy factors through a $1$-dimensional, reduced, closed subscheme of $X$. 

We write $V = \coprod_{i \in I} V_i$ for some indexing set $I$ where each $V_i$ is irreducible. We will denote the morphism $h|_{V_i}$ by $h_i$. Also, we write 
\[
W  = \coprod_{i,j \in I} \left(\coprod_{l \in K_{ij}} W^{ij}_l\right)
\]
\noindent for indexing sets $K_{ij}$ depending on pairs $i,j \in I$, so that each $W^{ij}_l$ is irreducible and $\left(\coprod_{l \in K_{ij}} W^{ij}_l\right) \to V_i \times_{\A^1_U} V_j$ is a Nisnevich cover. For every pair $i,j \in I$ and $l \in K_{ij}$, we have the sequence of morphisms $$W^{ij}_l \to V_i \times_{\A^1_U} V_j \to V_i \stackrel{h_i}{\to} X.$$ In the following argument, we will use $h_i|_{W^{ij}_l}$ (resp.  $h_i|_{V_i \times_{\A^1_U} V_j}$) to denote the composition of the morphisms in this sequence starting from $W^{ij}_l$ (resp. $V_i \times_{\A^1_U} V_j$). 

As we observed, the fact that $X$ is non-uniruled implies that the restrictions of $h^W$ to $W^{ij}_l$ for various $i,j \in I$ and $l \in K_{ij}$ are either constant $\A^1$-chain homotopies or they factor through some $1$-dimensional, reduced closed subscheme of $X$. If the restrictions of $h^W$ to all the $W^{ij}_k$ are constant for all $i,j \in I$ and $l \in K_{ij}$, then we have $h_i|_{W^{ij}_l} = h_j|_{W^{ij}_l}$ for all choices of $i,j$ and $l$. Since the morphism $\left(\coprod_{l \in K_{ij}} W^{ij}_l\right) \to V_i \times_{\A^1_U} V_j$ is an epimorphism, this implies that $h_i|_{V_i \times_{\A^1_U} V_j} =  h_j|_{V_i \times_{\A^1_U} V_j}$. But this means that the  morphisms $h_i$ give rise to a morphism $\A^1_U \to X$ which lifts $\~h: \A^1_U \to \sS(X)$. By the definition of $\sS(X)$, this proves $t_1 = t_2$.

Thus we now assume that for some fixed $i,j \in I$ and some fixed $l \in K_{ij}$, the restriction of $h^W$ to $W^{ij}_l$ is a non-constant $\A^1$-chain homotopy.  Hence, (2) above cannot hold.  We claim that in this case all the morphisms $h_i$ for $i \in I$ as well as the $\A^1$-chain homotopy $h^W$ factor through some $1$-dimensional, reduced, closed subscheme of $D$ of $X$. Once this is proved, the result will follow from Proposition \ref{proposition A1-invariance for curves}.  Indeed, then we see that the $t_1,t_2: U \to X$ must factor through $D$ and, being connected by an $\A^1$-ghost homotopy within $D$, must also be connected by an $\A^1$-chain homotopy in $D$. It now remains to prove the existence of $D$. 

The restriction of $h^W$ to $W^{ij}_l$ must factor through some $1$-dimensional, closed subscheme of $C^{ij}_l \subset X$, by (1) above. In particular, the morphisms $h_i|_{W^{ij}_{l}}$ and $h_j|_{W^{ij}_{l}}$ also factor through $C^{ij}_l \hookrightarrow X$. As $V_i$ and $V_j$ are assumed to be irreducible and since the morphisms $W^{ij}_l \to V_i$ and $W^{ij}_l \to V_j$ are dominant, both $h_i$ and $h_j$ also factor through $C^{ij}_l \hookrightarrow X$. 

Now we consider any other index $m \in K_{ij}$. If the restriction of $h^W$ to $W^{ij}_m$ is a non-constant $\A^1$-chain homotopy, the argument in the previous paragraph shows that this $\A^1$-chain homotopy factors through some $1$-dimensional, reduced, closed subscheme $C^{ij}_{m}$. On the other hand, if this $\A^1$-chain homotopy is constant, it must factor through the variety $C^{ij}_l$ obtained above and thus we may define $C^{ij}_m:= C^{ij}_l$. Taking the union $C^{ij}:= \bigcup_{m \in K_{ij}} C^{ij}_m$, we see that  $h_i$, $h_j$ and the restriction of $h^W$ to $\coprod_{m \in K_{ij}} W^{ij}_m$ factor through $C^{ij}$. 

Now let $i' \in I$ be any index. We claim that $h_{i'}$ factors through some $1$-dimensional, reduced, closed subscheme $C^i$ of $X$. For $i' = i$ or $j$, we define $C^{i'}:= C^{ij}$. For $i' \neq i,j$, we prove the claim by contradiction. Suppose that $h_{i'}$ does not factor through any $1$-dimensional, reduced, closed subscheme of $X$. First note that the scheme $V_i \times_{\A^1_U} V_{i^{\prime}}$ is non-empty since $\A^1_U$ is irreducible and $V_i, V_{i^{\prime}}$ are \'etale over $\A^1_U$. Thus the scheme $\coprod_{m \in K_{ii'}} W^{ii'}_m$ is non-empty. The restriction of $h^W$ to $\coprod_{m \in K_{ii'}} W^{ii'}_m$ cannot be the constant $\A^1$-chain homotopy since $h_i$ factors through a $1$-dimensional subscheme of $X$ while $h_{i'}$ does not. But then, by the argument in the previous paragraph, the fact that this $\A^1$-chain homotopy is nonconstant implies that $h_{i'}$ factors through some $1$-dimensional, reduced, closed subscheme of $C^{ii'}$ of $X$. This gives us a contradiction. Thus for every index $i' \in I$, the morphism $h_{i'}$ factors through some $1$-dimensional, reduced, closed subscheme $C^{i'}$ of $X$.  

Finally, we wish to prove that for any two indices $i', j' \in I$, the restriction of $h^W$ to $\coprod_{m \in K_{i'j'}} W^{i'j'}_m$ factors through some $1$-dimensional, reduced, closed subscheme of $X$. If this restriction is a non-constant $\A^1$-homotopy, the above argument gives us a $1$-dimensional, reduced, closed subscheme $C^{i'j'}$ of $X$ through which this homotopy must factor. If it is the constant homotopy, we note that both $h_{i'}$ and $h_{j'}$ factor through $C^{i'} \cup C^{j'}$. Thus, the restriction of $h^W$ to $\coprod_{m \in K_{i'j'}} W^{i'j'}_m$ must also factor through this subscheme. 

Now we may define $D$ to be the scheme $(\bigcup_{i,j \in I} C^{ij}) \cup (\bigcup_{i \in I} C^i)$. This completes the proof of the theorem.
\end{proof}

\begin{cor}
Conjectures \ref{a1invariance} and \ref{conjecture Asok-Morel} hold for a proper, non-uniruled surface over a field.
\end{cor}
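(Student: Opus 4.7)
The plan is to derive both conjectures from Theorem \ref{theorem non-uniruled} in a purely formal manner, using the machinery developed in Section \ref{Section lim S^n}.

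First, I would apply Theorem \ref{theorem non-uniruled} to obtain $\sS(X) = \sS^2(X)$, and then iterate this equality. Since $\sS^{n+1}(X) = \sS(\sS^n(X))$ by definition, a straightforward induction on $n$ yields $\sS^n(X) = \sS(X)$ for every $n \geq 1$. Passing to the filtered colimit, I get $\sL(X) = \varinjlim_n \sS^n(X) = \sS(X)$. Theorem \ref{theorem lim S^n} then immediately implies that $\sS(X)$ itself is $\A^1$-invariant.

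Next, I would combine the natural epimorphism $\sS(X) \to \pi_0^{\A^1}(X)$ (coming from $\sS(X) = \pi_0^s(Sing_*(X))$ and the $\A^1$-localization) with the factorization $\sF \to \pi_0^{\A^1}(\sF) \to \sL(\sF)$ of Remark \ref{remark factorization through pi_0-a1}, giving a chain $\sS(X) \to \pi_0^{\A^1}(X) \to \sL(X)$ whose composition is the canonical map into the colimit. Under the identification $\sL(X) = \sS(X)$ obtained in the previous step, precomposing this chain with the epimorphism $X \to \sS(X)$ recovers the canonical map $X \to \sS(X)$, and since $X \to \sS(X)$ is an epimorphism of sheaves, the composition $\sS(X) \to \pi_0^{\A^1}(X) \to \sS(X)$ must be the identity of $\sS(X)$. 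The map $\sS(X) \to \pi_0^{\A^1}(X)$ is therefore a split monomorphism that is also an epimorphism, hence an isomorphism; this gives Conjecture \ref{conjecture Asok-Morel} for $X$. The inverse identification $\pi_0^{\A^1}(X) \cong \sL(X)$ then yields $\A^1$-invariance of $\pi_0^{\A^1}(X)$, proving Conjecture \ref{a1invariance} for $X$.

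The only genuine geometric content is absorbed into Theorem \ref{theorem non-uniruled}; the present deduction is entirely diagrammatic, so the only thing to verify is that the canonical maps fit together as claimed. I do not anticipate any obstacle beyond writing this out cleanly.
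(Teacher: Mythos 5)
Your argument is correct and matches the route the paper intends: once Theorem~\ref{theorem non-uniruled} gives $\sS(X) = \sS^2(X)$, the colimit $\sL(X)$ stabilizes to $\sS(X)$, Theorem~\ref{theorem lim S^n} gives $\A^1$-invariance of $\sS(X)$, and the cancellation of the epimorphism $X \to \sS(X)$ against the factorization of Remark~\ref{remark factorization through pi_0-a1} forces $\sS(X) \to \pi_0^{\A^1}(X)$ to be a split mono as well as an epi, hence an isomorphism. The only detail worth spelling out — which you implicitly used correctly — is that "$\sS(X) = \sS^2(X)$" means the \emph{canonical} epimorphism is an isomorphism, so that by naturality of $\sF \to \sS(\sF)$ all transition maps in the colimit defining $\sL(X)$ are isomorphisms.
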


\begin{rmk}
One can prove that $\sS(X) \simeq \sS^2(X)$, when $X$ is a rational surface (or in fact, any rational variety) or a $\P^1$-bundle over a curve.  However, for a ruled surface $X$ (over an algebraically closed base field of characteristic 0), one can only prove that $\sS(X)(U) \simeq \sS^2(X)(U)$, where $U$ is the Spec of a Henselian discrete valuation ring.  In fact, the conclusion of Theorem \ref{theorem non-uniruled} is not true for ruled surfaces (even over an algebraically closed base field of characteristic $0$).  Indeed, if $C$ is a curve of genus $>0$ and $X$ is the blowup of $\P^1 \times C$ at a single closed point, then it is possible to show that $\sS(X)(U) \neq \sS^2(X)(U)$, for a two-dimensional smooth Henselian local scheme $U$. This shows that $\sS(X) \neq \sS^2(X)$ in general, even for smooth projective surfaces.  Surprisingly, for such ruled surfaces $X$, one can prove that $\sS^2(X) \simeq \sS^3(X)$.  For details, see \cite{Sawant-thesis}.   However, our proof for these results involves a lot more effort and the techniques used there substantially differ from the ones used in this paper.  We do not include it here since we can demonstrate this phenomenon in a much easier manner in higher dimensions (see Section \ref{section counterexamples}).  The details about $\A^1$-connected components of ruled surfaces will appear in a forthcoming paper \cite{BS}.
\end{rmk}

\begin{rmk}
Although the method of showing $\sS(X) \simeq \sS^2(X)$ is insufficient to address the question of the $\A^1$-invariance of $\pi_0^{\A^1}$ for an arbitrary smooth projective variety $X$, it is interesting to examine the question of whether, for any smooth projective variety $X$, the sequence of sheaves $(\sS^n(X))_{n \geq 1}$ stabilizes at some finite value of $n$.  If it does, one may ask whether it stabilizes to $\pi_0^{\A^1}(X)$, since that is what is predicted by Morel's conjecture (see Corollary \ref{a1-invariance-candidate}).  
\end{rmk}

\section{Examples of schemes for which the conjectures of Asok-Morel fail to hold}
\label{section counterexamples}

\subsection{Examples of schemes whose \texorpdfstring{$Sing_*$}{Sing*} is not \texorpdfstring{$\A^1$}{A1}-local}
The aim of this subsection is to construct an example of a smooth, projective variety $X$ over $\C$ such that:
\begin{itemize}
\item[(i)] $\sS(X) \neq \sS^2(X)$,
\item[(ii)] $\sS(X) \to \pi_0^{\A^1}(X)$ is not a monomorphism, and
\item[(iii)] $Sing_*(X)$ is not $\A^1$-local.
\end{itemize}

We will use a well-known characterization of Nisnevich sheaves, which we will recall here for the sake of convenience.

For any scheme $U$, an \emph{elementary Nisnevich cover} of $U$ consists of two morphisms $p_1: V_1 \to U$ and $p_2: V_2 \to U$ such that:
\begin{itemize}
 \item[(a)] $p_1$ is an open immersion.
 \item[(b)] $p_2$ is an \'etale morphism and its restriction to $p_2^{-1}(U \backslash p_1(V_1))$ is an isomorphism onto $U \backslash p_1(V_1)$. 
\end{itemize}
Then a presheaf of sets $\sF$ on $Sm/k$ is a sheaf in Nisnevich topology if and only if the morphism $$\sF(U) \to \sF(V_1) \times_{\sF(V_1 \times_U V_2)} \sF(V_2)$$ is an isomorphism, for all elementary Nisnevich covers $\{ V_1, V_2\}$ of $U$. If $\sF$ is a Nisnevich sheaf, the fact that this morphism is an isomorphism is an immediate consequence of the definition of a sheaf. The converse (see \cite[\textsection 3, Proposition 1.4, p.96]{Morel-Voevodsky} for a proof) is useful since it simplifies the criterion for checking whether sections $s_i \in \sF(V_i)$ determine a section  $s \in \sF(U)$. Indeed, suppose $pr_1$ and $pr_2$ are the two projections $V \times_U V \to V$. Then we do not need to check that the two elements $pr_1^*(s_1)$ and $pr_2^*(s_2)$ of $\sF(V \times_U V)$ are identical. We merely need to check that the images of $s_1$ and $s_2$ under the maps $\sF(V_1) \to \sF(V_1 \times_U V_2)$ and $\sF(V_2) \to \sF(V_1 \times_U V_2)$ respectively are identical.  

Now suppose $\sF$ is a sheaf of sets. Applying the above criterion to the sheaf $\sS(\sF)$, we see that when we work with elementary Nisnevich covers, we can construct morphisms $\A^1_U \to \sS(\sF)$ with only part of the data that is required for an $\A^1$-ghost homotopy of $U$ in $\sF$. 

\begin{lem}
\label{lemma Sing_* not A1-local}
Let $\sF$ be a sheaf of sets over $Sm/k$. Let $U$ be a smooth scheme over $k$ and let $f,g: U \to \sF$ be two morphisms. Suppose that we are given data of the form
\[
 (\{p_i: V_i \to \A^1_U\}_{i \in \{1,2\}}, \{\sigma_0,\sigma_1\}, \{h_1, h_2\}, h^W)
\]
where:
\begin{itemize}
 \item The two morphisms $\{p_i: V_i \to \A^1_U\}_{i=1,2}$ constitute an elementary Nisnevich cover.
 \item For $i \in \{0,1\}$, $\sigma_i$ is a morphism $U \to V_1 \coprod V_2$ such that $(p_1 \coprod p_2) \circ \sigma_i: U \to U \times \A^1$ is the closed embedding $U \times \{i\} \hookrightarrow U \times \A^1$,
 \item For $i \in \{1,2\}$, $h_i$ denotes a morphism $V_i \to \sF$ such that $(h_1 \coprod h_2) \circ \sigma_0 = f$ and $(h_1 \coprod h_2) \circ \sigma_1 = g$. 
 \item Let $W := V_1 \times_{\A^1_U}V_2$ and let $pr_i: W \to V_i$ denote the projection morphisms. Then $h^W = (h_1, \ldots, h_n)$ is a $\A^1$-chain homotopy connecting the two morphisms $h_i \circ pr_i: W  \to \sF$. 
\end{itemize}
Then, $f$ and $g$ map to the same element under the map $\sF(U) \to \pi_0^{\A^1}(\sF)(U)$.  Moreover, $Sing_*(\sF)$ is not $\A^1$-local.
\end{lem}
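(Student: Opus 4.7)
The plan is to carry out the gluing suggested by the hypotheses inside the $\A^1$-fibrant replacement $L := L_{\A^1}(\sF)$, which is both simplicially (Nisnevich) fibrant and $\A^1$-local. Composing the entire data with the canonical map $\sF \to L$, I obtain morphisms $\tilde h_i : V_i \to L$ (equivalently, $0$-simplices $\tilde h_i \in L(V_i)_0$) for $i = 1, 2$, and each component $h^W_j \in \sF(W \times \A^1) = Sing_*(\sF)_1(W)$ of the chain homotopy becomes a $1$-simplex in $L(W)$. Concatenated, these produce a simplicial path in $L(W)$ between $\tilde h_1|_W$ and $\tilde h_2|_W$.

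Next, since $L$ is simplicially Nisnevich fibrant, the distinguished square associated to the elementary Nisnevich cover $\{V_1, V_2\} \to \A^1_U$ yields a homotopy cartesian diagram of simplicial sets
\[
\xymatrix{
L(\A^1_U) \ar[r] \ar[d] & L(V_1) \ar[d] \\
L(V_2) \ar[r] & L(W).
}
\]
The triple $(\tilde h_1, \tilde h_2, \tilde h^W)$ thus defines a point of the homotopy pullback, which therefore lifts, uniquely up to simplicial homotopy, to a $0$-simplex $\tilde H \in L(\A^1_U)$ whose restriction to each $V_i$ is simplicially homotopic to $\tilde h_i$. Now I invoke the $\A^1$-locality of $L$: the projection $p : \A^1_U \to U$ induces a weak equivalence $L(U) \to L(\A^1_U)$, and the two compositions $(p_1 \coprod p_2) \circ \sigma_0$ and $(p_1 \coprod p_2) \circ \sigma_1$ are both right inverses of $p$, so their induced maps on $\pi_0(L(-))$ coincide (both being inverse to $p^*$). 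Applying this common map to $[\tilde H]$, and using $(h_1 \coprod h_2) \circ \sigma_i = f, g$ together with the gluing property of $\tilde H$, one identifies the two pullbacks with $[\tilde f]$ and $[\tilde g]$ in $\pi_0(L(U))$ respectively. Hence $[\tilde f] = [\tilde g]$, and Nisnevich sheafification yields the desired equality of $f$ and $g$ in $\pi_0^{\A^1}(\sF)(U)$.

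For the \emph{moreover} assertion I argue by contradiction. Suppose that $Sing_*(\sF)$ is $\A^1$-local. The canonical $\A^1$-weak equivalence $Sing_*(\sF) \to L$ is then an $\A^1$-weak equivalence between $\A^1$-local objects, hence a simplicial weak equivalence; in particular, the induced map $\sS(\sF) = \pi_0^s(Sing_*(\sF)) \to \pi_0^s(L) = \pi_0^{\A^1}(\sF)$ is an isomorphism, forcing $\sS(\sF)$ to be $\A^1$-invariant. The lemma will be invoked in Section \ref{section counterexamples} in a setting where $f$ and $g$ are demonstrated to have distinct images in $\sS(\sF)(U)$; combined with the equality in $\pi_0^{\A^1}(\sF)(U)$ just proved, this would contradict the displayed isomorphism, establishing that $Sing_*(\sF)$ cannot be $\A^1$-local. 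The principal technical hurdle is the descent step in the middle paragraph: one must verify carefully that the zigzag of $1$-simplices arising from the $\A^1$-chain homotopy really does give a well-defined point of the homotopy pullback, and that the ``up to simplicial homotopy'' ambiguities cohere in such a way that the pullbacks along $\sigma_0$ and $\sigma_1$ recover the classes $[\tilde f]$ and $[\tilde g]$ on $\pi_0$ precisely.
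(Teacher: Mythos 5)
Your proof is correct, and it takes a genuinely different route from the paper's. The paper works with a simplicially fibrant replacement $\sX$ of $Sing_*(\sF)$ (not the $\A^1$-fibrant replacement), observes that the open immersion $W \hookrightarrow V_2$ makes $\sX(V_2) \to \sX(W)$ a Kan fibration, and uses the simplicial homotopy coming from $h^W$ to lift $H_2$ to a vertex $H'$ that agrees with $H_1$ \emph{on the nose} over $W$; it then glues $H_1$ and $H'$ strictly to an element of $\sX(\A^1_U)$ using the (genuine, not merely homotopy) cartesian square coming from the sheaf condition, and finally transfers the resulting $\A^1$-homotopy over to $L_{\A^1}(\sF)$ to conclude. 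Your approach composes all the data directly into $L = L_{\A^1}(\sF)$ and invokes Nisnevich descent for the simplicially fibrant $L$ to lift a vertex of the homotopy pullback to $L(\A^1_U)$, then reads off the equality in $\pi_0$ from $\A^1$-locality. Your route is more conceptual and avoids the explicit Kan-fibration lifting step; the paper's is more elementary in that it reduces everything to a strict gluing. Both are valid; the bookkeeping ``up to simplicial homotopy'' that you flag as a concern is indeed the only real technical content of your middle paragraph, and it goes through because $L(W)$ is a Kan complex so the zigzag of $1$-simplices concatenates to a path, and because $\pi_0^{\A^1}(\sF)$ is defined as the sheafification of $U \mapsto \pi_0(L(U))$ so working up to simplicial homotopy is harmless.

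One point worth naming: as stated, the ``moreover'' assertion does not actually follow from the hypotheses alone — one also needs $f$ and $g$ to \emph{not} be $\A^1$-chain homotopic (equivalently, to have distinct images in $\sS(\sF)(U)$), which is precisely what you flag when you say the lemma will be applied in a setting where $f$ and $g$ are shown to have distinct images in $\sS(\sF)(U)$. The paper's own proof has exactly the same implicit hypothesis (its closing ``Consequently'' also silently uses it), so this is a defect in the statement rather than a gap in your argument; your presentation is in fact the more careful of the two in making the missing hypothesis explicit.
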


\begin{proof}
Choose a simplicially fibrant replacement $Sing_*(\sF) \to \sX$.  We denote by $H_i$ the composition $V_i \overset{h_i}{\to} \sF \to Sing_*(\sF) \to \sX$, for $i = 1,2$. According to the discussion preceding the statement of this lemma, the given data induces a morphism $\psi: \A^1_U \to \sS(\sF) = \pi_0^s(\sX)$. 

The $\A^1$-homotopy $h^{W}$  connects $h_i|_W$ for $i = 1,2$. Thus, by the definition of $Sing_*(\sF)$, it gives rise to a simplicial homotopy $h^s: \Delta^1 \to \sX(W)$ connecting $H_i$ for $i=1,2$. Thus we have the following diagram (where $*$ denotes the point sheaf)
\[
\xymatrix{
{*} \ar[r] \ar[d] & \sX(V_2) \ar[d] \\
\Delta^1 \ar[r] \ar@{-->}[ur]^{\exists} & \sX(W) 
}
\]
where the upper horizontal arrow maps $*$ to $H_2$.  Since $W = V_2 \backslash (U \times \pi^{-1}({0})) \to V_2$ is a cofibration, the morphism $\sX(V_2) \to \sX(W)$ of simplicial sets is a fibration. Thus the dotted diagonal lift exists in the above diagram. 

Thus we see that we can find a morphism $H^{\prime}: V_2 \to \sX$ such that:
\begin{itemize}
\item[(1)] $h^{\prime}$ and $h_2$ induce the same morphism $V_2 \to \pi_0^s(\sX)$ (since they are simplicially homotopic in $\sX(V_2)$), and
\item[(2)] $H^{\prime}|_W = H_1|_{W}$. 
\end{itemize}

Since $\sX$ is a sheaf, the following diagram is cartesian:
\[
\xymatrix{
\sX(\A^1_U) \ar[r] \ar[d] & \sX(V_2) \ar[d] \\
\sX(V_1) \ar[r] & \sX(W)\text{.}
}
\]
Thus, $H_1$ and $H^{\prime}$ can be glued to give an element of $\sX(\A^1_U)$ which we can think of as a morphism $\~\psi: \A^1_U \to \sX$. Clearly, this lifts $\psi$, as can be checked by restriction to the Nisnevich cover $\{V_1, V_2\}$.  

It is easily seen that the morphisms that $\~\psi$ connects are simplicially chain homotopic to the morphisms $\~f$ and $\~g$. Thus we see that the morphisms $\~f$ and $\~g$ are simplicially chain homotopic. From the explicit construction of the $\A^1$-fibrant replacement functor $L_{\A^1}$ given in \cite{Morel-Voevodsky}, we see that the two morphisms $U \to L_{\A^1}(\sF)$ obtained by composing $f$ and $g$ with the canonical morphism $\sF \to L_{\A^1}(\sF)$ are simplicially homotopic. Therefore, $f$ and $g$ map to the same element of $\pi_0^{\A^1}(\sF)(U)$.  Consequently, $\sX$ (and hence, $Sing_*(\sF)$) is not $\A^1$-local.
\end{proof} 

\begin{convention}
We will refer to the data of the form $$(\{p_i: V_i \to \A^1_U\}_{i \in \{1,2\}}, \{\sigma_0,\sigma_1\}, \{h_1, h_2\}, h^W)$$ as in Lemma \ref{lemma Sing_* not A1-local} by an \emph{elementary $\A^1$-ghost homotopy}. 
\end{convention}

We now proceed to the construction of the schemes satisfying properties (i)-(iii) stated at the beginning of the section.

\begin{construction}
\label{notation example 1}
To clarify the idea behind this construction, we first construct a non-proper, singular variety satisfying (i)-(iii). Later (in Construction \ref{notation example 2}), we will modify this example suitably to create an example of a smooth, projective variety over $\C$ which will also satisfy (i)-(iii).
\begin{itemize}
\item[(1)] Let $\lambda_i \in \C \backslash \{0\}$ for $i = 1,2,3$. Let $E \subset \A^2_{\C}$ be the affine curve cut out by the polynomial $y^2 - \prod_i (x-\lambda_i)$. Let $\pi: E \to \A^1_{\C}$ denote the projection onto the $x$-axis.
\item[(2)] Let $S_1$ and $S_2$ be the closed subschemes of $\A^3_{\C}$ cut out by the polynomials $y^2 = t^2 \prod_i (x - \lambda_i)$ and $y^2 = \prod_i (x-\lambda_i)$ respectively. Let $f: S_2 \to S_1$ be the morphism corresponding to the homomorphism of coordinate rings given by $x \mapsto x$, $y \mapsto yt$ and $t \mapsto t$. 
\item[(3)] Let $\alpha_0$ be a square-root of $-\lambda_1\lambda_2\lambda_3$ so that the point $(0,\alpha_0)$ is mapped to the point $x=0$ under the projection $\pi$. Let $p_0$, $p_1$ and $q$ denote the points $(0,0,0)$, $(0,\alpha_0,1)$ and $(1,0,0)$ of $S_1$. 
\item[(4)] Consider the morphism $\A^1_{\C} \to S_2$ given by $s \mapsto (0,\alpha_0,s)$ for $s \in \A^1_{\C}$. Composing this morphism with $f$, we obtain a morphism $\A^1_{\C} \to S_1$, the image is a closed subscheme of $S_1$ which we denote by $D$.
\end{itemize}
\end{construction}

Now let $S \subset \A^3_{\C}$ be the scheme $S_1 \backslash \{p_0\}$. We claim that $S$ satisfies condition (i) above. We claim that any morphism $h:\A^1_{\C} \to S$ such that $p_1$ lies in the image of $h$ is constant.  To see this, first observe that this is obvious if the image of $h$ is contained completely inside $D \backslash \{p_0\}$ (since any morphism from $\A^1_{\C}$ into $\A^1_{\C} \backslash \{0\}$ is a constant morphism).  If this is not the case, note that the morphism $f: S_2 = E \times \A^1_{\C} \to S_1$ is an isomorphism outside the locus of $t=0$.  Hence, $h$ induces a rational map $\A^1_{\C} \dashrightarrow E$, which can be completed to a morphism $\P^1_{\C} \to \-E$, where $\-E$ denotes a projective compactification of $E$.  This has to be a constant morphism by the Hurwitz formula, since the genus of $\-E$ is greater than that of $\P^1_{\C}$. Thus, there is no non-constant morphism $\A^1_{\C} \to S$ having $p_1$ in its image. Thus we see that $p_1$ and $q$ are not connected by an $\A^1$-
chain homotopy. However, we claim that they map to the same element in $\sS^2(S)(\C)$.

To construct an elementary $\A^1$-ghost homotopy, we first construct a Nisnevich cover of $\A^1_{\C}$. We observe that $\pi$ is \'etale except at the points where $x = \lambda_i$, $i = 1,2,3$. Now we define $\-V_1 = \A^1_{\C} \backslash \{0\}$ and $\-V_2 = \pi^{-1}(\A^1_{\C} \backslash \{\lambda_1, \lambda_2, \lambda_3\}) \subset E$. The morphism $\-V_1 \to \A^1_{\C}$ is the inclusion and the morphism $\-V_2 \to \A^1_{\C}$ is simply $\pi|_{\-V_2}$. As $\lambda_i \neq 0$ for all $i$, we see that this is an elementary Nisnevich cover of $\A^1_{\C}$. We define $\-W:= \-V_2 \backslash \pi^{-1}(0)$ and observe that $\-V_1 \times_{\A^1_{\C}} \-V_2 \cong \-W$. In order to give an elementary $\A^1$-ghost homotopy in $S$ using this Nisnevich cover, we will now define morphisms $\-h_i: \-V_i \to S$ and a morphism $h^{\-W}: \-W \times \A^1 \to S$, which is a homotopy connecting $(\pi \circ \-h_1)|_{\-W}$ and $\pi \circ \-h_2|_{\-W}$. 

We define $\-h_1: \-V_1 \to S$ by $s \mapsto (s,0,0)$ for $s \in \A^1_{\C}$ and $\-h_2: \-V_2 \to S$ by $(\alpha,\beta) \mapsto (\alpha,\beta,1)$ for $(\alpha,\beta) \in \-V_2$. To define $h^{\-W}$, we first identify $S_2$ with $E \times \A^1$ and define $h^{\-W}$ to be $f|_{\-W \times \A^1}$ (recall that $\-W \subset \-V_2 \subset E$). It is clear that the the $\C$-valued points $p_1$ and $q$ are mapped to the same element in $\sS^2(S)(\C)$. This shows that the surface $S$ satisfies the condition (i) listed above. 

We have thus obtained the data
\[
\left( \-V_1 \coprod \-V_2 \to \A^1_U, \-W \to \-V_1 \times_{\A^1_U} \-V_2, \-h_1 \coprod \-h_2, \-h^{\-W}\right), 
\]
\noindent which satisfies the hypotheses of Lemma \ref{lemma Sing_* not A1-local}, which proves that $S$ satisfies (ii) and (iii).

The following lemma is the key tool in creating a smooth example that satisfies (i) - (iii) stated at the beginning of the section:

\begin{lem}
\label{lemma rigid embedding}
Let $k$ be a field and let $S$ be an affine scheme over $k$. Then there exists a closed embedding of $S$ into a smooth scheme $T$ over $k$ such that for any field extension $L/k$, if $H: \A^1_L \dashrightarrow T$ is a rational map such that the image of $H$ meets $S$, then $H$ factors through $S \hookrightarrow T$. 
\end{lem}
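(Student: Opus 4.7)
The plan is to realize $S$ as a ``rigid fiber'' of a morphism into an abelian variety. Since every morphism from $\A^1$ into an abelian variety is constant, any rational map from $\A^1_L$ into the total space will be forced to factor through a single fiber of this morphism; if one arranges the fiber through $S$ to be exactly $S$, the desired factorization property follows.

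Concretely, first choose a closed embedding $S \hookrightarrow \A^N_k$ and generators $f_1, \ldots, f_r$ of the ideal of $S$, so that the morphism $f := (f_1, \ldots, f_r) : \A^N_k \to \A^r_k$ satisfies $f^{-1}(0) = S$. Next, fix an elliptic curve $A_1$ over $k$ with identity $e_1$ (such a curve exists over every field), choose a rational function $h_1$ on $A_1$ that is a local parameter at $e_1$, and take an open subscheme $V_1 \subset A_1$ containing $e_1$ on which $h_1$ is regular and has $e_1$ as its unique zero. Set $A := A_1^r$, $e := (e_1, \ldots, e_1) \in A$, $V := V_1^r \subset A$, and let $h : V \to \A^r$ be the morphism $(a_1, \ldots, a_r) \mapsto (h_1(a_1), \ldots, h_1(a_r))$. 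Then $h$ is \'etale at $e$ (because $h_1$ has nonzero derivative at $e_1$) and $h^{-1}(0) \cap V = \{e\}$. Now form $T_0 := \A^N_k \times_{\A^r_k} V$; the second projection $T_0 \to V$ has fiber $S \times \{e\}$ over $e$, which we identify with $S$. Since $h$ is \'etale at $e$, the first projection $T_0 \to \A^N_k$ is \'etale in a Zariski neighborhood of $S \times \{e\}$, so $T_0$ is smooth over $k$ there. Taking $T$ to be the smooth locus of $T_0$ yields an open subscheme which is smooth over $k$ and contains $S$ as a closed subscheme.

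To verify the rigidity property, let $L/k$ be a field extension and let $H : \A^1_L \dashrightarrow T$ be a rational map whose image meets $S$. Composing $H$ with the morphism $T \hookrightarrow T_0 \to V \hookrightarrow A$ produces a rational map $\tilde{H} : \A^1_L \dashrightarrow A_L$. Since $A_L$ is proper and $\A^1_L$ is a smooth curve, $\tilde{H}$ extends to a morphism $\A^1_L \to A_L$; since $A_L$ is an abelian variety, this morphism is constant with some value $v \in A(L)$. Because the image of $H$ meets $S = S \times \{e\}$, we must have $v = e_L$, so the image of $H$ is contained in $T_0 \cap (\A^N_k \times \{e_L\}) = S \times \{e_L\}$. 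Hence $H$ factors through $S \hookrightarrow T$. The only non-formal input is the construction of $h_1$ and $V_1$ (a uniformizer at $e_1$ together with a shrinking making $e_1$ its unique zero), which is standard; the rest of the argument reduces to the universal property of the fiber product and the $\A^1$-rigidity of abelian varieties.
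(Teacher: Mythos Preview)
Your proof is correct and follows essentially the same strategy as the paper: realize $S$ as $f^{-1}(0)$ for a morphism $f:\A^N\to\A^r$, then pull back along a map from (an open piece of) a product of genus~$\geq 1$ curves to $\A^r$ hitting $0$ at a single point, so that $\A^1$-rigidity of such curves forces any $H$ meeting $S$ into the fiber $S$. The only cosmetic differences are that the paper takes $g:C\to\A^1$ \'etale on all of $C$ (so the fiber product is smooth outright, avoiding your passage to the smooth locus) and invokes the Hurwitz formula rather than rigidity of abelian varieties; you could achieve the same by shrinking $V_1$ to the \'etale locus of $h_1$.
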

\begin{proof}
Choose an embedding of $S$ into $\A^n_k$ for some $n$ and suppose that as a subvariety of $\A^n_k$, $S$ is given by the ideal $\<f_1, \ldots, f_r\> \subset k[X_1, \ldots, X_n]$. Then the polynomials $f_i$ define a morphism $f: \A^n \to \A^r$ such that $S$ is the fibre over the origin. Now let $g: C \to \A^1_k$ be an \'etale morphism such that $C$ is a curve of genus $\geq 1$ and the preimage of the origin consists of a single $k$-valued point $c_0$ of $C$. Then this gives us an \'etale morphism $g^r: C^r \to \A^r$ such that the preimage of the origin is the point $(c_0, \ldots, c_0)$. Let $T = \A^n_k \times_{f,\A^r_k, g^r} C^r$. It is clear that $T \to \A^n_k$ induces an isomorphism over $S$. We claim that $T$ is the desired scheme. 

Let $L/k$ be a field extension and let $H: \A^1_L \dashrightarrow T$ be a rational such that the image of $H$ meets the preimage of $S$ in $T$. Since $C$ has genus $\geq 1$, the Hurwitz formula implies that any rational map $\A^1_L \dashrightarrow C$ is constant. Thus, the composition of $H$ with the projection $T \to C^r$ factors through $(c_0, \ldots, c_0)$. It follows from this that $H$ factors through $S$, which is the preimage of $(c_0, \ldots, c_0)$ in $T$.   
\end{proof}

\begin{construction}
\label{notation example 2}
We now construct a smooth and projective variety $X$ over $\C$ satisfying the conditions (i)-(iii) listed at the beginning of this subsection:
\begin{itemize}
\item[(1)] Using Lemma \ref{lemma rigid embedding}, we construct a scheme $T$ such that $S_1 \subset T$ as a closed subscheme and such that for any field extension $L/\C$, if $H: \A^1_L \to T$ is a morphism such that the image of $H$ meets $S_1$, then $H$ factors through $S_1$.
\item[(2)] Let $T^c \supset T$ be a smooth, projective compactification of $T$. 
\item[(3)] Let $C$ be a smooth, projective curve over $\C$ of genus $>0$. Let $c_0$ be a $\C$-valued point of $C$. Let $R = \sO^{h}_{C,c_0}$ and let $U = \Spec(R)$. Let $u$ be the closed point of $U$. Let $\gamma: U \to C$ be the obvious morphism.
\item[(4)] Let $p_2$ by any closed point of $D \backslash \{p_0, p_1\}$ (see Construction \ref{notation example 1}, (4)). Let $X$ be the blowup of $C \times T^c$ at the points $(c_0,p_0)$ (see Construction \ref{notation example 1}, (3)) and $(c_0,p_2)$.
\item[(5)] For any $\C$ valued point $p: \Spec(\C) \to T^c$ of $T^c$, let $\theta_p$ be the induced morphism $\gamma \times p: U = U \times \Spec(\C) \to C \times T^c$. Note that $\theta_p$ is a lift of $\gamma$ to $C \times T^c$ with respect to the projection $C \times T^c \to C$.
\item[(6)] Let $\xi_{p_1}$ and $\xi_q$ denote the lifts of $\theta_{p_1}$ and $\theta_{q}$ to $X$ (which exist and are unique).
\end{itemize}
\end{construction}

We claim that $\xi_{p_1}$ and $\xi_q$ are connected by an elementary $\A^1$-ghost homotopy. To prove this, we construct an elementary $\A^1$-ghost homotopy connecting $\theta_{p_1}$ and $\theta_{q}$ which lifts to $X$. This is done by simply taking the product of the morphism $\gamma: U \to C$ with the elementary $\A^1$-ghost homotopy of $\Spec(\C)$ in $S \subset T^c$ which we constructed above. In other words, we define $V_i = U \times \-V_i$ and $W = U \times \-W$. We define $h_i := \gamma \times \-h_i: V_i = U \times \-V_i \to C \times T^c$ and define $h^W:= \gamma \times h^{\-W}: W \times \A^1 = U \times \-W \to C \times T^c$. (Here we have abused notation by viewing $\-h_i$ and $h^{\-W}$ as morphisms into $T^c$ rather than as morphisms into $S \subset T^c.$) We observe that $W \cong V_1 \times_{\A^1_U} V_2$ and we have constructed an elementary $\A^1$-ghost homotopy $\sH$ of $U$ in $C \times T^c$. Since this elementary $\A^1$-ghost homotopy factors through the complement of the the points $(c_0,p_0)$ 
and $(c_0,p_2)$, it lifts to an elementary $\A^1$-ghost homotopy $\~{\sH}$ of $U$ in $X$ and connects $\xi_{p_1}$ and $\xi_q$. 

We now claim that $\xi_{p_1}$ and $\xi_q$ are not $\A^1$-chain homotopic. For this it will suffices to show that if $\xi_{p_1}$ is $\A^1$-homotopic to any $\xi: U \to X$, then $\xi(u) = \xi_{p_1}(u)$. Projecting this $\A^1$-homotopy down to $C \times T^c$, we see that it suffices to prove that if $h$ is an $\A^1$-homotopy of $U$ in $C \times T^c$ which lifts to $X$ and if $\sigma_0 \circ h = \theta_{p_1}$, then $h$ maps the closed fibre $\A^1_{\C} \subset \A^1_U$ to $\theta_{p_1}(u)$. 

Let $\-D$ be the closure of $D$ (see Construction \ref{notation example 1}, (4)) in $C \times T^c$. We see by Construction \ref{notation example 2}, (4) that $\-D$ is the only rational curve through the point $\theta_{p_1}(u)$. We claim that any $\A^1$-homotopy $h$ of $U$ in $C \times T^c$ such that $h \circ \sigma_0 = \theta_{p_1}$ must map $\A^1_{\C} \subset \A^1_U$ into $D$. Indeed, this follows immediately from the fact that the composition of this $\A^1$-homotopy with the projection $pr_1: C \times T^c \to C$ must be the constant homotopy (since $C$ is $\A^1$-rigid). In fact, it is easy to see that $pr_1 \circ h$ must factor through $\gamma$.

Now we claim that the image of $h|_{\A^1_{\C}}$ does not contain the point $p_0$. Indeed, if it were to contain this point, since $h$ can be lifted to $X$, the preimage of $p_0$ under $h$ would be a non-empty, codimension $1$ subscheme of $\A^1_U$ which is contained in $\A^1_{\C} \subset \A^1_U$. Thus it would be equal to $\A^1_{\C}$. Since $h \circ \sigma (u) = p_1 \neq p_0$, this is impossible. Thus, we see that the image of $h|_{\A^1_{\C}}$ does not contain $p_0$. By the same argument it does not contain $p_2$.  

However, a morphism from $\A^1_{\C}$ into a rational curve which avoids at least two points on the rational curve must be a constant (since $\G_m$ is $\A^1$-rigid). Thus $h|_{\A^1_{\C}}$ is a constant. This completes our proof of the claim that $\xi_{q}$ and $\xi_{p_1}$ are not $\A^1$-chain homotopic. Thus, we have now proved that $X$ satisfies the property (i) listed above. Lemma \ref{lemma Sing_* not A1-local} implies that it also satisfies property (ii). If $\Sing_*(X)$ were $\A^1$-local, the morphism $\sS(X):= \pi_0^s(Sing_*(X)) \to \pi_0^{\A^1}(X)$ would be an isomorphism. Thus $X$ also satisfies property (iii). 

\begin{rmk}
Note that this method can be used to prove that $Sing_*(X)$ is not $\A^1$-local for any scheme $X$ for which the following holds: there exists a smooth Henselian local scheme $U$ and two morphisms $f, g:U \to X$, which are not $\A^1$-chain homotopic and are such that there exists an $\A^1$-ghost homotopy connecting $f$ to $g$ that is defined on an elementary Nisnevich cover of $\A^1_U$.  Such a scheme $X$ is obviously a counterexample to $\sS(X) \simeq \sS^2(X)$ and hence, also a counterexample to Conjecture \ref{conjecture Asok-Morel} of Asok and Morel.
\end{rmk} 
 
\subsection{Example showing that \texorpdfstring{$\pi_0^{\A^1}$}{pi0A1} is not a birational invariant}

We end this paper by noting a counterexample to the birationality of $\pi_0^{\A^1}$ of smooth proper schemes over a field $k$.

\begin{defn}
\label{definition A1 rigid}
A scheme $X \in {\mathcal Sm}_k$ is said to be \emph{$\A^1$-rigid}, if for every $U \in {\mathcal Sm}_k$, the map
\[
X(U) \longrightarrow X(U \times \A^1)
\]
induced by the projection map $U \times \A^1 \to U$ is a bijection.
\end{defn}

Since schemes are simplicially fibrant objects, by \cite[\textsection 2, Lemma 3.19]{Morel-Voevodsky}, it follows that $\A^1$-rigid schemes are $\A^1$-fibrant objects.  Thus, if $X \in Sm/k$ is $\A^1$-rigid, then for any $U \in Sm/k$ the canonical map $X(U) \to \Hom_{\mathcal H(k)}(U,X)$ is a bijection.  Consequently, the canonical map $X \to \pi_0^{\A^1}(X)$ is an isomorphism of Nisnevich sheaves of sets.

\begin{ex} \label{example pi_0 not birational}
Let $X$ be an abelian variety of dimension at least $2$.  Since $X$ is an $\A^1$-rigid scheme, $\pi_0^{A^1}(X) \simeq X$, as noted above.  Now, blow up $X$ at a point to get another scheme $Y$.  Then the blow-up map $Y \to X$ is a birational proper map which induces a map $\pi_0^{\A^1}(Y) \to \pi_0^{\A^1}(X)$.  If $\pi_0^{\A^1}$ is a birational invariant, then $\pi_0^{\A^1}(Y) \to \pi_0^{\A^1}(X)$ must be an isomorphism. However, $Y\to \pi_0^{\A^1}(Y)$ is an epimorphism, implying that the map $Y\to X$ is an epimorphism (in Nisnevich topology).  But a blow-up cannot be an epimorphism in Nisnevich topology, unless it is an isomorphism.  Indeed, if $Y \to X$ is an epimorphism, it would admit a Nisnevich local section, which in turn, would imply that it is an isomorphism, $Y \to X$ being a birational proper morphism.  This is a contradiction.  Consequently, $\pi_0^{\A^1}(X)$ cannot be a birational invariant of smooth proper schemes.  
\end{ex}

\end{document}